\newlength{\arrowbla}
\newtheorem{lemma}{Lemma}[section]
\newtheorem{theorem}[lemma]{Theorem}
\newtheorem{definition}[lemma]{Definition}
\newtheorem{proposition}[lemma]{Proposition}
\newcommand{\op}{\mathrm{op}}
\newcommand{\niceo}{\mathcal{O}}
\newcommand{\nices}{\mathcal{S}}
\newcommand{\p}{\niceo\Sigma}
\newcommand{\pp}{\mathcal{P}}
\newcommand{\sets}{\mathrm{Set}}
\newcommand{\ch}{\mathbf{Ch}}
\newcommand{\card}[1]{|#1|}
\newcommand{\id}{\mathrm{id}}
\newcommand{\sgn}{\mathrm{sgn}}
\newcommand{\ojoin}{J^\niceo}
\newcommand{\symjoin}{J^\Sigma}
\newcommand{\chsusp}{\sigma}
\newcommand{\fraka}{\mathfrak{a}}
\newcommand{\frakj}{\mathfrak{j}}
\newcommand{\ord}{\overline}
\newcommand*{\myrightarrow}[1]{\xrightarrow{\mathmakebox[\arrowbla]{#1}}}
\newcommand{\removed}[1]{}
\newcommand{\michal}[1]{}
\begin{document}
\title{The Symmetric Join Operad}

\author[Micha{\l} Adamaszek]{Micha{\l} Adamaszek}
\address{Mathematics Institute and DIMAP,
      \newline University of Warwick, Coventry, CV4 7AL, UK}
\email{aszek@mimuw.edu.pl}
\thanks{The first author was supported by the Centre for Discrete
        Mathematics and its Applications (DIMAP), EPSRC award EP/D063191/1.}

\author[John D.S. Jones]{John D.S. Jones}
\address{Mathematics Institute,
      \newline University of Warwick, Coventry, CV4 7AL, UK}
\email{J.D.S.Jones@warwick.ac.uk}

\keywords{Join, operads, $E_\infty$, cochain operations}
\subjclass[2010]{18D50, 55U10, 55U15}


\begin{abstract}
The join operad arises from the combinatorial study of the iterated join of simplices. We study a suitable simplicial version of this operad which includes the symmetries given by permutations of the factors of the join. From this combinatorics we construct an $E_\infty$-operad which coacts naturally on the chains of a simplicial set.
\end{abstract}
\maketitle

\section{Introduction}

Right from the outset we establish the following convention: {\bf we index simplices by the number of vertices, rather than by dimension, that is we write $\Delta_k$ for a simplex with $k$ vertices and therefore dimension $k-1$.} 

Let $X$ and $Y$ be topological spaces.  Then $X\ast Y$, the {\em join} of $X$ and $Y$, is a quotient space of $I \times X \times Y$ and so there is a canonical map
$$
I \times X \times Y \to  X \ast Y.
$$
More generally, there are canonical maps 
$$
\Delta_k \times X_1 \times \dots \times X_k \to X_1 \ast \dots \ast X_k.
$$
The join of two simplices is a simplex and these canonical maps make the sequence of spaces $\Delta_k$ into a topological operad.

If we look for a version of  these canonical maps at the level of simplicial sets we are led, quite naturally, to two fundamental points.
\begin{itemize}
\item
The simplest way to construct such maps for simplicial sets uses exactly the same geometric constructions with simplices which are the basis of Steenrod's construction \cite{Steen} of the cup-i products at the cochain level.
\item
While the join operation of spaces is symmetric it is not symmetric at the level of simplicial sets.  The basic point is that if $A$ and $B$ are geometric simplices and if we order the vertices of $A$ and $B$, then the natural orderings of the vertices of $A \ast B$ and $B \ast A$ are not the same.
\end{itemize}

In the theory of classical simplicial complexes simplices are uniquely determined by their vertices and so as usual we identify a simplex with its set of vertices.  Then, assuming $K$ and $L$ are classical simplicial complexes, the simplices of $K \ast L$ are given by $\sigma\sqcup\tau$ where $\sigma$ is a simplex in $K$ or else it is empty and $\tau$ is a simplex in $L$ or else it is empty.  The simplest way of dealing with this is to add to both $K$ and $L$ a simplex of dimension $-1$ with the empty set as its set of vertices.  Passing to simplicial sets this amounts to  working with simplicial sets augmented by adjoining a set of simplices which have \lq no vertices\rq\ or dimension $-1$. In terms of the category $\Delta$ of ordered sets and order preserving maps used in the theory of simplicial sets this corresponds to replacing $\Delta$ by the category  $\niceo$ obtained by adjoining to $\Delta$ an additional initial object and working with $\niceo$-sets, also known as augmented simplicial sets, rather than $\Delta$-sets.  The category of $\niceo$-sets is the natural context for the join operation, see \S \ref{section:2} for the details.  

However the join of $\niceo$-sets is not symmetric.  What we lack is an action of the symmetric group $\Sigma_n$ on the set of simplices with $n$ vertices which `behaves like' reordering the vertices of an ordered simplex.  This brings us to the framework of $\niceo\Sigma$-sets, an augmented version of $\Delta\Sigma$-sets, one of the basic examples of crossed simplicial groups studied by Fiedorowicz and Loday \cite{FL}. Any augmented simplicial set determines a free $\niceo\Sigma$-set, and the category of $\niceo$-sets is equivalent to the category of $\niceo\Sigma$-sets in a suitable homotopy theoretic sense, see \ref{subsect:homotopy}. This leads to the symmetric join $\symjoin$ of $\niceo\Sigma$-sets and symmetric versions of the canonical maps
$$
\niceo\Sigma_n\times X_1\times\cdots\times X_n\to\symjoin(X_1,\ldots,X_n)
$$
where $\niceo\Sigma_n$ is the $\niceo\Sigma$-set corresponding to the augmented simplex $\Delta_n$ .

The join of $\niceo\Sigma_n$ and $\niceo\Sigma_m$ is $\niceo\Sigma_{n+m}$ and these constructions produce an operad $\{\niceo\Sigma_n\}_{n\geq 0}$ in the category of $\niceo\Sigma$-sets.  This is the fundamental object in this paper.  The other operads we consider are derived from this one by forming pairs, applying the forgetful functor to simplicial sets, and applying chains.   Combining the canonical maps with the diagonal $ X \to X^n$ we get a canonical `coaction' 
$$
\niceo\Sigma_n\times X\to\symjoin(X,\ldots,X)
$$
on any $\niceo\Sigma$-set $X$.

Having sorted out the combinatorics of the symmetric join operad of $\niceo\Sigma$-sets we convert it into a chain level $E_\infty$-operad $\{\frakj(n)\}_{n\geq 0}$, see Definition \ref{defn:operad}. The canonical maps  yield, after some manipulation, chain level coaction
$$
\frakj(n)\otimes C_*(Y)\to C_*(Y)^{\otimes n}.
$$
for any simplicial set $Y$, making $C_*(Y)$ into a coalgebra over $\frakj$. 

The cooperations in $C_*(Y)$ induced by this coaction include the classical Alexander-Whitney coproduct and higher cup-i coproducts whose duals in $C^*(Y)$ are the standard cup product and higher cup-i products.   This expresses in a very precise sense the relation between Steenrod's construction of the cup-i products and the combinatorics of the join of simplices which was the starting point for this work.

This is not at all the first construction of a chain $E_\infty$-operad which endows the chain complex $C_*(Y)$ (resp. $C^*(Y)$) with a natural structure of an $E_\infty$-coalgebra (resp. $E_\infty$-algebra). In \cite{McSmith03} McClure and Smith constructed the sequence operad and its action on $C^*(Y)$.  We will compare the symmetric join operad with the sequence operad in Section \ref{subsect:constr}. Berger and Fresse \cite{BerFres04} analyse the Barratt-Eccles operad, which is the chain version of a simplicial operad obtained from the bar constructions of symmetric groups. Another such operad is the condensation of the multi-coloured lattice path operad of Batanin and Berger \cite{BatBer}. It contains the sequence operad as a suboperad. There is also a significant amount of unpublished work  due, independently, to Jim Milgram and Ezra Getzler to be acknowledged.  These constructions are significant since by \cite{Man01,Man06}, under suitable assumptions on $Y$, an $E_\infty$-algebra structure on $C^*(Y)$ determines the homotopy type of $Y$. 

Perhaps the main theme in this paper is to understand better the relation between chain cooperations (or cochain operations) and the combinatorics of joins.  Another important theme is the idea that $\niceo\Sigma$-sets give a conceptual way of encoding the relations between reorderings of the vertices of a simplex and its faces and degeneracies and therefore form the most natural setting for our general constructions. Finally, one of our aims is to do as much as possible in the context of simplicial geometry and translate this into a coaction on chains in the last possible moment.

This paper is set out as follows:  \S\ref{section:2} contains the background on the necessary modifications of simplicial sets, that is $\niceo$-sets and $\niceo\Sigma$-sets which are the natural setting for our constructions;  \S\ref{section:3} describes in detail the construction of the symmetric join operad; in \S\ref{section:4} we describe the manipulations required to produce and $E_\infty$-operad and its coaction on $C_*(Y)$ where $Y$ is a simplicial set; finally \S\ref{sect:appendix} contains some deferred proofs.

\section{Background}
\label{section:2}
\subsection{Indexing categories.}

Let $\Delta$ denote the usual simplicial category, with objects $[n]=\{0,\ldots,n\}$ for $n\geq 0$ and order-preserving maps as morphisms. For $0\leq i\leq n$ let $\delta_i: [n-1]\to [n]$ be the injection whose image does not contain $i$ and let $\sigma_i: [n+1]\to [n]$ be the surjection which maps $i$ and $i+1$ to the same value.

Let $\niceo$ be the ordinal category, whose objects are the sets $\ord{n}=\{1,\ldots,n\}$ for $n\geq 0$ where $\ord{0}=\emptyset$ and whose morphisms are the order-preserving maps. For $1\leq i\leq n$ let $\delta_i: \ord{n-1}\to \ord{n}$ be the injection whose image does not contain $i$ and let $\sigma_i:\ord{n+1}\to \ord{n}$ be the surjection which maps $i$ and $i+1$ to the same value. For each $n$ there is a unique morphism $\ord{0}\to\ord{n}$ and there are no morphisms $\ord{n}\to\ord{0}$ if $n>0$.

The assignment $[n]\to\ord{n+1}$ defines an inclusion $\Delta\hookrightarrow\niceo$. It sends the morphisms $\delta_i$, $\sigma_i$ of $\Delta$ to $\delta_{i+1}$, $\sigma_{i+1}$ of $\niceo$, respectively.

Our main tool is the category $\niceo\Sigma$, see \cite{FL,Pirashvili}. Its objects are the same as the objects of $\niceo$. A morphism $f\in\niceo\Sigma(\ord{n},\ord{m})$ is, by definition, a map of sets together with a complete order on each of the sets $f^{-1}(i)$ for $i\in\ord{m}$.
There is a forgetful functor $\niceo\Sigma\to\sets$ which discards the ordering and remembers the underlying set map.

It follows that an \emph{injective} set map $f:\ord{n}\to\ord{m}$ determines a unique morphism in $\niceo\Sigma$.
In particular we have inclusions $\Sigma_n\subseteq\p(\ord{n},\ord{n})$ for all $n$. Moreover, there is a canonical embedding of categories 
$$\niceo\hookrightarrow \p.$$
On the objects it is the identity and it takes an order-preserving map to the morphism with the same underlying set map and with each fibre ordered according to the natural ordering of the source.

The category $\p$ has another description. First note that any morphism $\phi\in\p(\ord{n},\ord{m})$ has a unique decomposition as $\phi=f \circ\pi$ where $f\in\niceo(\ord{n},\ord{m})\subseteq\p(\ord{n},\ord{m})$ and $\pi\in\Sigma_n\subseteq\p(\ord{n},\ord{n})$. We can therefore identify elements of $\p(\ord{n},\ord{m})$ with pairs $(f,\pi)\in\niceo(\ord{n},\ord{m})\times \Sigma_n$. Let us express the composition of morphisms in this representation. First, for any permutation $\pi\in\Sigma_n$ and an order-preserving map $g\in\niceo(\ord{k},\ord{n})$, define a permutation $g^*\pi\in\Sigma_k$ and a map $\pi_*g\in\niceo(\ord{k},\ord{n})$ as the unique pair with the following two properties:
\begin{itemize}
\item the diagram
\begin{equation*}
\xymatrix{
\ord{k}\ar[r]^g\ar[d]_{g^*\pi} & \ord{n}\ar[d]^\pi\\
\ord{k}\ar[r]_{\pi_*g} & \ord{n}
}
\end{equation*}
commutes
\item the permutation $g^*\pi$ is order-preserving on each of the fibres $g^{-1}(i)$.
\end{itemize}
A quick calculation now shows that the composition of morphisms in $\p$ is defined as follows: if $(f,\pi)\in\p(\ord{n},\ord{m})$ and $(g,\sigma)\in\p(\ord{k},\ord{n})$ then
\begin{equation}
(f,\pi)\circ(g,\sigma) = (f\circ(\pi_*g),(g^*\pi)\circ\sigma)\in\p(\ord{k},\ord{m}).
\end{equation}
In this representation the inclusion $\niceo(\ord{n},\ord{m})\hookrightarrow\p(\ord{n},\ord{m})$ is given by $f\to(f,\id)$, the forgetful functor $\p\to\sets$ is given by $(f,\pi)\to f\pi$ and the inclusion $\Sigma_n\subseteq\p(\ord{n},\ord{n})$ is $\pi\to(\id,\pi)$.

\subsection{Some notation.}
\label{subsect:extranotation}
Let $\pp$ denote any of the categories $\niceo$, $\p$ or $\sets$.  For any subset $I\subseteq \ord{m}$ let $i_I:\ord{\card{I}}\to\ord{m}$ be the unique morphism in $\pp$ determined by the order-preserving injection of the set $\ord{\card{I}}$ into $\ord{m}$ with image $I$. For any morphism $f\in\pp(\ord{k},\ord{m})$ and any subset $I\subseteq\ord{m}$ we define $f_I:\ord{\card{f^{-1}(I)}}\to\ord{m}$ and $f^I:\ord{\card{f^{-1}(I)}}\to\ord{\card{I}}$ as the unique morphisms in $\pp$ which make the following diagram commute:
\[
\xymatrix{
\ord{\card{f^{-1}(I)}} \ar[d]_{i_{f^{-1}(I)}} \ar[r]^{f^I} \ar[dr]^{f_I}  & \ord{\card{I}} \ar[d]^{i_I} \\
\ord{k} \ar[r]^f                  & \ord{m} 
}
\]
Intuitively, $f_I$ is the morphism $f$ restricted to the preimage of $I$ and $f^I$ is obtained by further restricting the target to $I$. Note that $i_I=\id_I$.

Now every morphism $f\in\pp(\ord{k},\ord{n})$ determines a decomposition of $\ord{k}$ into $n$ blocks $\{f^{-1}(i)\}_{i=1}^n$ of sizes $a_i=\card{f^{-1}(i)}$. Given a sequence of morphisms defined on these blocks, 
$$g_i\in\pp(\ord{a_i},\ord{k_i})$$
for some $k_i\geq 0$, we can combine them to from a morphism
$$h\in\pp(\ord{k},\ord{k_1+\cdots+k_n}).$$
Here $h$ is the unique morphism which for $1 \leq i \leq n$ makes each of the following diagrams commute:
\[
\xymatrix{
\ord{a_i} \ar[d]_{i_{f^{-1}(i)}} \ar[r]^{g_i}  & \ord{k_i} \ar[d] \\
\ord{k}  \ar[r]^-{h}                  & \ord{k_1+\cdots+ k_n}
}
\]
where the right hand vertical map is the order-preserving inclusion of the $i$-th block in the sum. We will use the notation
\begin{equation*}
h=f\langle g_1,\ldots,g_n\rangle.
\end{equation*}

Note that the morphism $f$ is only used to determine the blocks, so the definition of $f\langle g_1,\ldots,g_n\rangle$ makes sense also when $f\in\sets(\ord{k},\ord{n})$ and $g_i\in\p(\ord{a_i},\ord{k_i})$ and it produces an element of $\p(\ord{k},\ord{k_1+\cdots+k_n})$

It is not difficult to verify directly that we have the following identities of $\pp$-morphisms:
\begin{align}
\label{eq1}\tag{$\pp$1} i_I\circ f^I & =f\circ i_{f^{-1}(I)}\\
\label{eq2}\tag{$\pp$2} (fg)\langle h_1\circ g^{f^{-1}(1)}, \ldots, h_n\circ g^{f^{-1}(n)}\rangle & = f\langle h_1,\ldots,h_n\rangle\circ g\\
\label{eq3}\tag{$\pp$3} g^{f^{-1}(I)}\circ f^I &=(gf)^I\\
\label{eq4}\tag{$\pp$4} i_A\circ(i_B)^A & = i_{A\cap B}.
\end{align}
Note that all the above constructions and the formulas (\ref{eq1})-(\ref{eq4}) are preserved by the functors $\niceo\hookrightarrow\p\to\sets$.

\subsection{$\Delta$-sets, $\niceo$-sets and $\niceo\Sigma$-sets.}
\label{subsect:sets}

A $\pp$-set (where $\pp$ is $\Delta$, $\niceo$ or $\p$) is a contravariant functor from $\pp$ to the category of sets. We will write $X(n)$ instead of $X([n])$ or $X(\ord{n})$.  It is important to be clear that for a $\Delta$-set $X$ the set  $X(n)$ is to be thought of as a set of simplices \emph{of dimension $n$}, while for a $\niceo$-set or $\p$-set $X$ the set $X(n)$ is to be thought of as a set of simplices \emph{with $n$ vertices}. 
For an $\niceo$-set or $\p$-set $X$ we have the $i$-th face map $d_i=\delta_i^*:X(n)\to X(n-1)$ and $i$-th degeneracy $s_i=\sigma_i^*:X(n+1)\to X(n)$ for $1\leq i \leq n$. 

The inclusion $\Delta\hookrightarrow\niceo$ induces a forgetful functor $U:\sets^{\niceo^\op}\to\sets^{\Delta^\op}$ which satisfies $(UX)(n)=X(n+1)$ for $n\geq 0$ (it forgets the augmentation $X(0)$). For a simplicial set $Y$ we will denote by $Y_+$ the one-point augmentation, i.e. the $\niceo$-set with
\begin{displaymath}
Y_+(n)=\left\{\begin{array}{ll}
Y(n-1) & \textrm{ if } n\geq 1\\
\ast  & \textrm{ if } n=0.\end{array}\right.
\end{displaymath}
where $\ast$ is a singleton set.
The functor $Y\mapsto Y_+$ is the right adjoint of $U$.

The inclusion $\niceo\hookrightarrow\p$ induces another forgetful functor $I:\sets^{\p^\op}\to\sets^{\niceo^\op}$. It has a left adjoint denoted $X\mapsto X\Sigma$. For an $\niceo$-set $X$ it is given by $(X\Sigma)(n)=X(n)\times\Sigma_n$ and the structure maps are defined by the formula
\begin{equation}
(x,\pi)\circ(g,\sigma) = (x\circ(\pi_*g),(g^*\pi)\circ\sigma)
\end{equation}
for $(x,\pi)\in X(n)\times\Sigma_n$ and $(g,\sigma)\in\p(\ord{m},\ord{n})$. We write the structure maps as acting on the right to indicate contravariance.

In particular, for $1\leq i\leq n$ the face map $d_i:(X\Sigma)(n)\to(X\Sigma)(n-1)$ is given by
\begin{equation}
\label{eqn:di}
d_i(x,\pi)=(x,\pi)\circ(\delta_i,\id_{n-1})=(x\circ\pi_*\delta_i,\delta_i^*\pi)=(x\delta_{\pi(i)},\delta_i^*\pi)=(d_{\pi(i)}x,d_i\pi)
\end{equation}
where $d_i\pi$ denotes the $(n-1)$-permutation obtained from $\pi$ by erasing the $i$-th position and reindexing.

For each $n\geq 0$ we have the canonical objects $\Delta_n$, $\niceo_n$ and $\p_n$ given by
$$\Delta_n(m)=\Delta([m],[n]),\newline
 \niceo_n(m)=\niceo(\ord{m},\ord{n}),\ \p_n(m)=\p(\ord{m},\ord{n})$$
and 
$${(\Delta_n)}_+=\niceo_{n+1}, \ U\niceo_n=\Delta_{n-1} \textrm{ for } n\geq 1,\ \niceo_n\Sigma=\p_n.$$

We can also consider categories of pairs $(X,X')$ where $X'$ is a sub-object of $X$. Note that since there are no morphisms in $\niceo$ or $\p$ with target $\ord{0}$, the $0$-component $X(0)$ of an $\niceo$-set or $\p$-set $X$ is a sub-object in a trivial way, so we can always form a pair $(X,X(0))$ of $\niceo$-sets or $\p$-sets. This gives a way to  remove the simplices with no vertices from either $\niceo$-sets or $\p$-sets.

For every $n\geq 0$ we have canonical pairs 
$$(\Delta_n,\partial\Delta_n),\ (\niceo_n,\partial\niceo_n),\ (\p_n,\partial\p_n)$$
in the respective categories. In each case the sub-object $\partial\pp_n$ consists of those morphisms whose underlying set map is not surjective.

The \emph{geometric realization} $|X|$ of an $\niceo$-set or $\p$-set $X$ is defined by passing to the underlying simplicial set, resp. $UX$ or $UIX$.

\subsection{Homotopical properties of $\p$-sets.}
\label{subsect:homotopy}



Let $X$ be a $\niceo$-set and let $\eta_X:X\to I(X\Sigma)$  be the unit of the adjunction $(-)\Sigma:\sets^{\niceo^\op}\rightleftarrows\sets^{\p^\op}:I$.  By \cite[Prop.5.1]{FL} there is a commutative diagram
$$
\xymatrix{
|X| \ar[r]^-{|\eta_X|} \ar[rd]_-{(x,\ast)} & |I(X\Sigma)| \ar[d]^-{\substack{(p_1,p_2)\\ \equiv}}\\
& |X|\times|\p_1|
}
$$
where $(p_1,p_2)$ is a homeomorphism and $|\p_1|$ is contractible by the argument of \cite[Ex.6]{FL}. It follows that $|\eta_X|:|X|\to|I(X\Sigma)|$ is always a homotopy equivalence. In particular, the spaces $|\p_n|$ are contractible for $n\geq 1$. 

In fact, one can say more. The category of $\Delta\Sigma$-sets, the obvious non-augmented version of $\niceo\Sigma$-sets, satisfies the assumptions of Theorem 6.2 of \cite{DHK} which provides it with a model structure in which a map is a weak equivalence if and only if it is a weak equivalence of the underlying simplicial sets. Then the adjoint pair $(-)\Sigma:\sets^{\Delta^\op}\rightleftarrows\sets^{\Delta\Sigma^\op}:I$ is a Quillen equivalence of model categories, in particular it induces an equivalence of homotopy categories.


%

\section{The join operad of $\p$-sets.}
\label{section:3}
\subsection{Joins.}
The category of $\niceo$-sets is a natural context for the join operation at the level of simplicial sets.
Let $X_1,\ldots, X_n$ be $\niceo$-sets. Then their join is the $\niceo$-set $\ojoin(X_1,\ldots,X_n)$ defined as
\begin{align}
\label{ojoin1}\ojoin(X_1,\ldots,X_n)(k)&=\coprod_{a_1+\cdots+a_n=k}X_1(a_1)\times\cdots\times X_n(a_n)\\
\label{ojoin2}&=\coprod_{\phi\in\niceo(\ord{k},\ord{n})}\ \prod_{i=1}^n X_i(\card{\phi^{-1}(i)}).
\end{align}
The two definitions are clearly equivalent because every order-preserving map $\phi\in\niceo(\ord{k},\ord{n})$ determines, and is determined, by an ordered partition of $k$ into $n$ parts of sizes $a_i=\card{\phi^{-1}(i)}$. The join has the following structure maps. An order-preserving map $f\in\niceo(\ord{k'},\ord{k})$ and a partition $a_1+\cdots+a_n=k$ of $k$ determine a new partition $a_1'+\cdots+a_n'=k'$ of $k'$ and a sequence of order-preserving maps $f_i:\ord{a_i'}\to\ord{a_i}$. Then for $(x_1,\ldots,x_n)\in X_1(a_1)\times\cdots\times X_n(a_n)$ we have
$$(x_1,\ldots,x_n)\circ f = (x_1f_1,\ldots,x_nf_n)\in X(a_1')\times \cdots\times X(a_n').$$

There is an alternative formulation of this recipe which uses the definition (\ref{ojoin2}) of the join. If $f\in\niceo(\ord{k'},\ord{k})$ then $f$ takes the summand indexed by $\phi\in\niceo(\ord{k},\ord{n})$ to the summand of $\phi f\in\niceo(\ord{k'},\ord{n})$ as follows
$$(x_1,\ldots,x_n)\circ f = (x_1f^{\phi^{-1}(1)},\ldots,x_nf^{\phi^{-1}(n)})\in \prod_{i=1}^n X_i(\card{(\phi f)^{-1}(i)}).$$
Note that $f^{\phi^{-1}(i)}$ is exactly the morphism $f_i$ from the previous definition. It is straightforward to check using (\ref{eq3}) that these maps define the structure of an $\niceo$-set on the join.

Notice how this definition mimics the combinatorial structure of the join of two classical simplicial complexes. Indeed it follows from \cite{Ehlers} that if $Y_1,\ldots,Y_n$ are simplicial sets we have a homeomorphism
$$|\ojoin({Y_1}_+,\ldots,{Y_n}_+)|\equiv|Y_1|\ast\cdots\ast|Y_n|$$.

The category of $\p$-sets also has a join operation.  For $\p$-sets $X_1,\ldots, X_n$ we define
\begin{align}
\label{symjoin1}\symjoin(X_1,\ldots,X_n)(k)&=\coprod_{a_1+\cdots+a_n=k}X_1(a_1)\times\cdots\times X_n(a_n)\times_{\Sigma_{a_1}\times\cdots\times\Sigma_{a_n}}\Sigma_k\\
\label{symjoin2}  &=\coprod_{\phi\in\sets(\ord{k},\ord{n})}\ \prod_{i=1}^n X_i(\card{\phi^{-1}(i)}).
\end{align}

The equivalence of the two definitions follows from the fact that every map of sets $\phi\in\sets(\ord{k},\ord{n})$ can be factored as $\phi=f\pi$ with $\pi\in\Sigma_k$ and $f\in\niceo(\ord{k},\ord{n})$. In this factorization $f$ is determined uniquely and it induces a decomposition $a_1+\cdots+ a_n=k$ (with $a_i=\card{\phi^{-1}(i)}=\card{f^{-1}(i)}$), while $\pi$ is unique up to postcomposition with an element of $\Sigma_{a_1}\times\cdots\times\Sigma_{a_n}$. The maps
$$(x_1,\ldots,x_n)\to[(x_1(\pi^{-1})^{\phi^{-1}(1)}, \ldots, x_n(\pi^{-1})^{\phi^{-1}(n)}),\pi]$$
$$[(y_1,\ldots,y_n),\pi]\to(y_1\pi^{f^{-1}(1)}, \ldots, y_n\pi^{f^{-1}(n)})$$
establish the equivalence between the two versions of the join.

The structure map of $\symjoin$ induced by the morphism $f\in\p(\ord{k'},\ord{k})$ takes the summand indexed by $\phi\in\sets(\ord{k},\ord{n})$ to the summand of $\phi f\in\sets(\ord{k'},\ord{n})$ by the formula
\begin{equation}
\label{eqn:joinstruct}
(x_1,\ldots,x_n)\circ f = (x_1f^{\phi^{-1}(1)},\ldots,x_nf^{\phi^{-1}(n)})\in \prod_{i=1}^n X_i(\card{(\phi f)^{-1}(i)}).
\end{equation}
The verification that these maps assemble to the structure of an $\p$-set is identical to that for $\ojoin$. 

A direct calculation with the representation (\ref{symjoin1}) of the join shows that if $f=(g,\sigma)\in\niceo(\ord{k'},\ord{k})\times\Sigma_{k'}$ then the structure map induced by $f$ is given by the formula
$$[(x_1,\ldots,x_n),\pi]\circ(g,\sigma) = [(x_1(\pi_*g)_1,\ldots,x_n(\pi_*g)_n),g^*\pi\circ\sigma]$$
where $(\pi_*g)_i:\ord{a_i'}\to\ord{a_i}$ are the components of the order-preserving map $\pi_* g:\ord{k'}\to\ord{k}$ determined by the partition $a_1+\cdots+a_n=k$. 

\begin{lemma} For any sequence of $\niceo$-sets $X_1,\ldots,X_n$ we have an isomorphism of $\p$-sets
\label{lem:symmetrization}
$$\symjoin(X_1\Sigma,\ldots,X_n\Sigma)=\ojoin(X_1,\ldots,X_n)\Sigma.$$
\end{lemma}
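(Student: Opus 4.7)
The plan is to produce an explicit level-wise bijection between the two sides and then verify it intertwines the $\p$-actions. Unfolding the definitions,
$$
\symjoin(X_1\Sigma,\ldots,X_n\Sigma)(k)=\coprod_{\phi\in\sets(\ord{k},\ord{n})}\prod_{i=1}^n X_i(a_i(\phi))\times\Sigma_{a_i(\phi)},
$$
$$
\ojoin(X_1,\ldots,X_n)\Sigma(k)=\coprod_{f\in\niceo(\ord{k},\ord{n})}\Big(\prod_{i=1}^n X_i(a_i(f))\Big)\times\Sigma_k,
$$
where $a_i(\cdot)=|(\cdot)^{-1}(i)|$. Grouping the left-hand coproduct by partition type $(a_1,\ldots,a_n)$ yields $\binom{k}{a_1,\ldots,a_n}\prod_i a_i!=k!$ copies of $\prod_i X_i(a_i)$, which matches the $k!$ copies coming from a single $f\in\niceo$ of that partition type on the right. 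So the underlying sets at level $k$ have the right total cardinality and one looks for a canonical bijection.

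The bijection I would use is the one implicit in the discussion in Section \ref{section:2}. Given $((x_1,\ldots,x_n),\pi)$ in the $\ojoin(X_1,\ldots,X_n)\Sigma$ summand indexed by $f\in\niceo(\ord{k},\ord{n})$, form $\phi=f\circ\pi\in\sets(\ord{k},\ord{n})$; since $\pi$ bijects $\phi^{-1}(i)=\pi^{-1}(f^{-1}(i))$ onto $f^{-1}(i)$ and both are ordered subsets of $\ord{k}$ of common cardinality $a_i(\phi)=a_i(f)$, the restriction of $\pi$ becomes a well-defined permutation $\sigma_i\in\Sigma_{a_i(\phi)}$ after identifying each ordered fibre with $\ord{a_i(\phi)}$ via its order-preserving bijection. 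Send $((x_1,\ldots,x_n),\pi)$ to $((x_1,\sigma_1),\ldots,(x_n,\sigma_n))$ in the $\phi$-summand. This is precisely the combinatorial chain $\niceo(\ord{k},\ord{n})\times\Sigma_k\cong\p(\ord{k},\ord{n})\cong\coprod_\phi\prod_i\Sigma_{a_i(\phi)}$ applied levelwise, with the $X_i$-factors transported unchanged; as such it is tautologically invertible.

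Second, I would check that this bijection commutes with the $\p$-actions. For $(g,\tau)\in\p(\ord{k'},\ord{k})$, the action on the $\ojoin\Sigma$ side first rewrites $\pi\circ(g,\tau)$ as $(\pi_*g,g^*\pi\circ\tau)$ using the defining formula of the $(-)\Sigma$ structure map, and then lets $\pi_*g\in\niceo(\ord{k'},\ord{k})$ act on $(x_1,\ldots,x_n)$ block by block through the restrictions $(\pi_*g)^{f^{-1}(i)}$. On the $\symjoin$ side, $(g,\tau)$ acts directly via (\ref{eqn:joinstruct}) on the tuple of pairs $(x_i,\sigma_i)\in X_i\Sigma$, with each pair transformed by the $(-)\Sigma$ structure map on $X_i\Sigma$. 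The main obstacle is to match these two outputs: one must show that the block-decomposition of $g^*\pi\circ\tau\in\Sigma_{k'}$ along the fibres of $\phi\circ g$ coincides with the per-block permutations produced by $(g,\tau)^{\phi^{-1}(i)}$ acting on each $\sigma_i$. I expect this to fall out of repeated applications of (\ref{eq2}) and (\ref{eq3}) together with the defining properties of $\pi_*g$ and $g^*\pi$; the argument is routine but notationally heavy, and the real care is in tracking the fibrewise orderings at every step.
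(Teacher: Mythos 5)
Your levelwise bijection is the right one --- it is precisely the composite of the equivalence between the presentations (\ref{symjoin1}) and (\ref{symjoin2}) that the paper records (your fibrewise permutations $\sigma_i=\pi^{f^{-1}(i)}$ are exactly the maps written there) with the tautological identification $X_i\Sigma(a_i)=X_i(a_i)\times\Sigma_{a_i}$ --- but your proof stops exactly where the content of the lemma begins. In the presentation (\ref{symjoin2}) that you chose, \emph{all} of the work is in the compatibility with the structure maps, and your last paragraph only conjectures that this holds. Unwinding both sides for $h=(g,\tau)\in\p(\ord{k'},\ord{k})$ acting on $((x_1,\ldots,x_n),\pi)$ in the summand $f$, with $\phi=f\pi$ and with $h^{\phi^{-1}(i)}=(u_i,v_i)$ decomposed into an order-preserving map $u_i$ and a permutation $v_i$, what you must prove is the pair of identities
$(\pi^{f^{-1}(i)})_*u_i=(\pi_*g)^{f^{-1}(i)}$ and $u_i^*\bigl(\pi^{f^{-1}(i)}\bigr)\circ v_i=(g^*\pi\circ\tau)^{(f\circ\pi_*g)^{-1}(i)}$.
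These are not instances of (\ref{eq1})--(\ref{eq4}): they mix the restriction operations $(\;)_I$, $(\;)^I$ with the operations $\pi_*g$, $g^*\pi$, whose interaction is established nowhere, and verifying them is essentially equivalent to proving the lemma. (Your sketch also garbles the bookkeeping slightly: the target summand is indexed by $\phi\circ g\circ\tau$, and the relevant block decomposition of $g^*\pi\circ\tau$ is along the fibres of $f\circ\pi_*g$, not of $\phi\circ g$.) The cardinality count in your first paragraph is only a sanity check and contributes nothing. So, as written, you have a bijection of sets in each degree, not yet an isomorphism of $\p$-sets.

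The paper's proof is one line because it uses the other presentation, (\ref{symjoin1}): the type-$(a_1,\ldots,a_n)$ summand of $\symjoin(X_1\Sigma,\ldots,X_n\Sigma)(k)$ is the balanced product $\prod_i\bigl(X_i(a_i)\times\Sigma_{a_i}\bigr)\times_{\Sigma_{a_1}\times\cdots\times\Sigma_{a_n}}\Sigma_k$, and since the $\Sigma_{a_i}$-factors are free the quotient cancels them, leaving canonically $\prod_i X_i(a_i)\times\Sigma_k$, which by (\ref{ojoin1}) is the corresponding summand of $\ojoin(X_1,\ldots,X_n)\Sigma(k)$. Moreover, in this presentation the structure map of the symmetric join, $[(x_1,\ldots,x_n),\pi]\circ(g,\sigma)=[(x_1(\pi_*g)_1,\ldots,x_n(\pi_*g)_n),g^*\pi\circ\sigma]$, is word for word the defining formula of the $\p$-set structure on $\ojoin(X_*)\Sigma$, so there is nothing left to check. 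If you want to keep your route, the shortest repair is to cite the already-established equivalence of (\ref{symjoin1}) and (\ref{symjoin2}) (which is where the structure-map compatibility of your bijection is actually encoded) and then conclude with the cancellation argument above; carrying out your two identities directly is possible but amounts to redoing that equivalence by hand.
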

\begin{proof}
Since $X_i\Sigma(k)=X_i(k)\times\Sigma_k$ this follows immediately from (\ref{symjoin1}) and (\ref{ojoin1}).
\end{proof}

Because every map $h\in\niceo(\ord{k},\ord{k_1+\cdots+k_n})$ has a unique presentation in the form $h=f\langle g_1,\ldots,g_n\rangle$ where $f\in\niceo(\ord{k},\ord{n})$ and $g_i\in\niceo(\ord{\card{f^{-1}(i)}},\ord{k_i})$, we obtain isomorphisms
$$\Theta:\ojoin(\niceo_{k_1},\ldots,\niceo_{k_n})\xrightarrow{\simeq}\niceo_{k_1+\cdots+k_n}.$$
It follows from Lemma \ref{lem:symmetrization} that they yield isomorphisms
$$\Theta:\symjoin(\p_{k_1},\ldots,\p_{k_n})\xrightarrow{\simeq}\p_{k_1+\cdots+k_n}.$$
In both cases the map inducing the isomorphism acts on the summand of the join indexed by a map $\phi\in\sets(\ord{k},\ord{n})$ as
\begin{equation}
\label{eqn:standard-assoc}
\prod_{i=1}^n \pp(\ord{\card{\phi^{-1}(i)}},\ord{k_i})\ni(g_1,\ldots,g_n)\xrightarrow{\Theta}\phi\langle g_1,\ldots,g_n\rangle\in\pp(\ord{k},\ord{k_1+\cdots+k_n})
\end{equation}
where $\pp$ is $\niceo$ or $\p$. We will frequently use the formal similarity between (\ref{ojoin2}) and (\ref{symjoin2}) to present a single argument that simultaneously applies to analogous statements about $\ojoin$ and $\symjoin$.

A slight generalization of this example shows that the join operations are associative, that is $J(X,J(Y,Z))=J(J(X,Y),Z)=J(X,Y,Z)$. The generalization to more than three sets is obvious, but we state it in the next lemma to introduce the notation which will be used later.

\begin{lemma}
\label{lemma:associative}
Let $X_{1,*},\ldots,X_{n,*}$ be sequences of $\niceo$-sets or $\p$-sets, where the $i$-th sequence has length $k_i$ for $i=1,\ldots,n$. Let $X_{*,*}$ denote the sequence of length $\sum k_i$ obtained by joining the given sequences in the lexicographic order of indices. Then there is an isomorphism
$$\Theta: J(J(X_{1,*}),\ldots,J(X_{n,*}))\to J(X_{*,*})$$
where $J=\ojoin$ or $J=\symjoin$.
\end{lemma}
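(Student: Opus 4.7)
The plan is to extend the representable isomorphism (\ref{eqn:standard-assoc}) from the case $X_{i,j}=\pp_{k_i}$ to arbitrary sequences, where $\pp=\niceo$ if $J=\ojoin$ and $\pp=\sets$ if $J=\symjoin$. The formulas (\ref{ojoin2}) and (\ref{symjoin2}) are formally identical modulo this choice, so the two cases can be handled in parallel.

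The first step is to unfold the $k$-simplices on both sides. On the LHS, a $k$-simplex is a tuple $(\phi,(\phi_i)_{i=1}^n,(x_{i,j}))$ with $\phi\in\pp(\ord{k},\ord{n})$, $\phi_i\in\pp(\ord{|\phi^{-1}(i)|},\ord{k_i})$ and $x_{i,j}\in X_{i,j}(|\phi_i^{-1}(j)|)$; on the RHS it is a pair $(\psi,(y_{i,j}))$ with $\psi\in\pp(\ord{k},\ord{k_1+\cdots+k_n})$ and $y_{i,j}\in X_{i,j}(|\psi^{-1}((i,j))|)$, where $(i,j)$ denotes the position in the lex-ordered sequence. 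I would define $\Theta$ by sending $(\phi,(\phi_i),(x_{i,j}))$ to $(\phi\langle\phi_1,\ldots,\phi_n\rangle,(x_{i,j}))$. It is a bijection on indices because every $\psi$ decomposes uniquely as $\phi\langle\phi_1,\ldots,\phi_n\rangle$: one recovers $\phi$ by composing $\psi$ with the block-indexing map $\ord{k_1+\cdots+k_n}\to\ord{n}$, and each $\phi_i$ by restricting $\psi$ to the $i$-th block. The equality $|\phi_i^{-1}(j)|=|\psi^{-1}((i,j))|$ is built into the $\langle\cdot\rangle$ construction.

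The second step is to verify naturality of $\Theta$ under the right action of an arbitrary $f\in\pp(\ord{k'},\ord{k})$. On the LHS, two applications of (\ref{eqn:joinstruct}) -- first to the outer join and then separately inside each inner join -- produce a tuple whose outer label is $(\phi f)\langle\phi_1\circ f^{\phi^{-1}(1)},\ldots,\phi_n\circ f^{\phi^{-1}(n)}\rangle$ and whose action on each $x_{i,j}$ is by $(f^{\phi^{-1}(i)})^{\phi_i^{-1}(j)}$. On the RHS, a single application of (\ref{eqn:joinstruct}) gives label $\psi f$ and action $f^{\psi^{-1}((i,j))}$. The outer labels agree by identity (\ref{eq2}) applied with $h_i=\phi_i$ and $g=f$, and the inner restrictions agree by (\ref{eq3}) together with the identification $\psi^{-1}((i,j))=i_{\phi^{-1}(i)}(\phi_i^{-1}(j))\subseteq\ord{k}$.

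The main obstacle is precisely this naturality bookkeeping, and (\ref{eq2}) is the formal identity designed for it. In the $\symjoin$ case the situation is slightly more delicate because the summands are indexed by $\sets$ while morphisms are in $\p$; I would handle this via the explicit $(g,\sigma)$-formula for the structure map recalled just before Lemma \ref{lem:symmetrization}, noting that the unique decomposition $\psi=\phi\langle\phi_1,\ldots,\phi_n\rangle$ carries the ordering data on each fibre coherently through $\Theta$, so the argument reduces to the $\sets$-level case plus a straightforward check on the extra ordering datum.
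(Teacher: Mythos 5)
Your proof is correct and takes essentially the same route as the paper's: the same definition of $\Theta$ on summands via $(\phi;\phi_1,\ldots,\phi_n)\mapsto\phi\langle\phi_1,\ldots,\phi_n\rangle$ with identity maps on the factors, the same bijectivity argument from the unique decomposition of morphisms into $\pp(\ord{k},\ord{k_1+\cdots+k_n})$, and the same appeal to (\ref{eq2}) for the outer label in the naturality check. The only cosmetic difference is in the inner-restriction step: the identity $(f^{\phi^{-1}(i)})^{\phi_i^{-1}(j)}=f^{\psi^{-1}((i,j))}$ you attribute to (\ref{eq3}) follows most directly from (\ref{eq1}) and (\ref{eq4}) (iterated restriction of a single morphism, rather than restriction of a composite), a routine point the paper leaves implicit under the set equality $h^{-1}(i,j)=g_i^{-1}(j)$.
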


The easy proof of this lemma is postponed until Section \ref{sect:appendix}. From now we will often simplify notation in this way by writing $X_*$ instead of $(X_1,\ldots,X_n)$.

It follows that the join $\ojoin(X,Y)$ endows the category of $\niceo$-sets with a monoidal structure with unit $\niceo_0$. This monoidal operation, however, is not symmetric. Indeed, the $\niceo$-sets $\ojoin(X,Y)$ and $\ojoin(Y,X)$ have isomorphic sets of simplices but there is no isomorphism between these sets which commutes with the structure maps.  Intuitively, they have the same simplices but the vertices of these simplices are ordered differently. 

This deficiency is corrected by the join $\symjoin$ of $\p$-sets. The category of $\p$-sets becomes a \emph{symmetric} monoidal category with join $\symjoin(X,Y)$, unit $\p_0$ and the symmetry isomorphism described in the next lemma.

\begin{lemma}
\label{lem:equiv-join}
For any sequence $X_1,\ldots,X_n$ of $\p$-sets and a permutation $\sigma\in\Sigma_n$ there is a natural isomorphism of $\p$-sets
$$T_\sigma:\symjoin(X_1,\ldots,X_n)\to\symjoin(X_{\sigma^{-1}(1)},\ldots,X_{\sigma^{-1}(n)}).$$
\end{lemma}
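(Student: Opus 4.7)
My plan is to work with the ``unordered'' description (\ref{symjoin2}) of the symmetric join, where summands are indexed by arbitrary set maps $\phi\in\sets(\ord{k},\ord{n})$. The natural candidate for $T_\sigma$ is to send the summand indexed by $\phi$ to the summand indexed by $\sigma\circ\phi$, by reindexing the tuple of factors. Explicitly, on the $\phi$-summand I would define
\[
T_\sigma\bigl((x_1,\ldots,x_n)_\phi\bigr)=(x_{\sigma^{-1}(1)},\ldots,x_{\sigma^{-1}(n)})_{\sigma\circ\phi}.
\]
This makes sense because of the key identity $(\sigma\circ\phi)^{-1}(i)=\phi^{-1}(\sigma^{-1}(i))$, which guarantees that the $i$-th factor on the right lives in $X_{\sigma^{-1}(i)}(\card{(\sigma\phi)^{-1}(i)})=X_{\sigma^{-1}(i)}(\card{\phi^{-1}(\sigma^{-1}(i))})$, matching the element $x_{\sigma^{-1}(i)}$ drawn from the $\phi$-summand.

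The map $T_\sigma$ is a bijection of sets of simplices with inverse $T_{\sigma^{-1}}$: composing gives identity reindexing of tuples and identity composition of permutations, which one verifies by a direct substitution. What remains is to check that $T_\sigma$ commutes with the $\p$-structure maps, i.e.\ that it is really a morphism of $\p$-sets. For $f\in\p(\ord{k'},\ord{k})$, formula (\ref{eqn:joinstruct}) gives on the $\phi$-summand the tuple with $i$-th entry $x_i\cdot f^{\phi^{-1}(i)}$, and the analogous formula on the $(\sigma\phi)$-summand produces the tuple with $i$-th entry $x_{\sigma^{-1}(i)}\cdot f^{(\sigma\phi)^{-1}(i)}$. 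The identity $(\sigma\phi)^{-1}(i)=\phi^{-1}(\sigma^{-1}(i))$ shows that $f^{(\sigma\phi)^{-1}(i)}=f^{\phi^{-1}(\sigma^{-1}(i))}$, so applying $T_\sigma$ after $f$ yields the same result as applying $f$ after $T_\sigma$. Both paths land in the summand indexed by $\sigma\circ\phi\circ f$.

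Naturality in $X_1,\ldots,X_n$ is automatic since $T_\sigma$ is built out of reindexing of factors and permutation of the indexing set map, with no reference to the internal structure of the $X_i$. I do not foresee a serious obstacle: the whole argument is the observation that the coproduct in (\ref{symjoin2}) is indexed by $\sets(\ord{k},\ord{n})$, which carries an evident left $\Sigma_n$-action by post-composition, and this action is compatible with the structure maps precisely because the operations $f\mapsto f^I$ are natural in the subset $I$ of the target. The only bookkeeping to be careful about is the placement of $\sigma$ versus $\sigma^{-1}$, forced by the requirement that $T_\sigma\circ T_\tau=T_{\sigma\tau}$, which is the reason the target of $T_\sigma$ is $\symjoin(X_{\sigma^{-1}(1)},\ldots,X_{\sigma^{-1}(n)})$.
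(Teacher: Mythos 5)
Your proof is correct and takes essentially the same approach as the paper: the paper also defines $T_\sigma$ via the presentation (\ref{symjoin2}), sending the summand indexed by $\phi$ to the one indexed by $\sigma\phi$ while shuffling the factors, and notes that this left action commutes with the right action of the structure morphisms and satisfies $T_{\pi\sigma}=T_\pi\circ T_\sigma$. You have merely spelled out the details (the identity $(\sigma\phi)^{-1}(i)=\phi^{-1}(\sigma^{-1}(i))$ and the check against (\ref{eqn:joinstruct})) that the paper leaves implicit.
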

\begin{proof}
We describe the map $T_\sigma$ using the presentation (\ref{symjoin2}). In degree $k$ the map $T_\sigma$ sends the summand of $\symjoin(X_*)$ indexed by $\phi\in\sets(\ord{k},\ord{n})$ to the summand of $\symjoin(X_{\sigma^{-1}(*)})$ indexed by $\sigma\phi\in\sets(\ord{k},\ord{n})$ shuffling the factors appropriately. This left action commutes with the right actions of the structure morphisms, hence $T_\sigma$ becomes a map of $\p$-sets. It is immediate that $T_{\pi\sigma}=T_\pi\circ T_\sigma$.
\end{proof}
We can transcribe this description of the isomorphism $T_\sigma$ to the presentation (\ref{symjoin1}). Given $\sigma\in\Sigma_n$ and $a_1+\cdots+a_n=k$ let $\sigma_{a_1,\ldots,a_n}\in\Sigma_k$ be the block permutation determined by $\sigma$ which permutes the blocks of sizes $a_1,\ldots,a_n$ in the way $\sigma$ permutes $n$ letters. Formally
$\sigma_{a_1,\ldots,a_n} = \sigma_* f$ where $f\in\niceo(\ord{k},\ord{n})$ is the order-preserving map corresponding to the partition $a_1+\cdots+a_n=k$. Then
$$T_\sigma([(x_1,\ldots,x_n),\pi])=[(x_{\sigma^{-1}(1)},\ldots,x_{\sigma^{-1}(n)}),\sigma_{a_1,\ldots,a_n}\pi].$$
In particular, the basic symmetry operator of the monoidal structure 
$$T_{X,Y}:\symjoin(X,Y)\to\symjoin(Y,X)$$
acts by sending the element $[(x,y),\pi]\in X(a)\times Y(b)\times_{\Sigma_a\times\Sigma_b}\Sigma_{a+b}$ to $$[(y,x),\tau_{a,b}\pi]\in Y(b)\times X(a)\times_{\Sigma_b\times\Sigma_a}\Sigma_{b+a}$$ where $\tau_{a,b}$ switches the two blocks of sizes $a$ and $b$.

\subsection{Canonical maps and the join operad of $\p$-sets.}
We can now define the $\p$-analogue of the canonical maps of the  introduction. For a sequence of $\p$-sets $X_1,\ldots,X_n$ let $A(X_1,\ldots,X_n)$, also denoted $A(X_*)$, be the $\p$-set
\begin{equation*}
A(X_*)=\p_n\times X_1\times\cdots\times X_n.
\end{equation*}
We define a natural map of $\p$-sets
\begin{equation*}
\alpha: A(X_*)\to\symjoin(X_*)
\end{equation*}
as follows. Write an element of $A(X_*)(k)=\p(\ord{k},\ord{n})\times X_1(k)\times\cdots\times X_n(k)$ as $(f;x_1,\ldots,x_n)$. Then $\alpha(f;x_1,\ldots,x_n)$ lies in the summand of $\symjoin(X_*)$ indexed by the underlying map $f\in\sets(\ord{k},\ord{n})$ and
\begin{equation}
\label{eqn:alpha}
\alpha(f;x_1,\ldots,x_n)=(x_1i_{f^{-1}(1)},\ldots,x_ni_{f^{-1}(n)}).
\end{equation}
If $f=(g,\sigma)$ then we can also write
\begin{equation}
\label{eqn:alpha2}
\alpha((g,\sigma);x_1,\ldots,x_n)=[(x_1(\sigma^{-1}\circ i_{g^{-1}(1)}),\ldots,x_n(\sigma^{-1}\circ i_{g^{-1}(n)})),\sigma].
\end{equation}
Again, it is not difficult to check that $\alpha$ is a well defined map of $\p$-sets. For every $n$ it is, in fact, a natural transformation between the functors $A,\symjoin:(\sets^{\p^\op})^{\times n}\to\sets^{\p^\op}$. The precise argument is given in Section \ref{sect:appendix}.
Moreover it is immediate to check that this transformation is $\Sigma_n$-equivariant in the sense that
\begin{equation}
\label{eq:alphaequivariant}
T_\pi(\alpha(f;x_1,\ldots,x_n)) = \alpha(\pi f;x_{\pi^{-1}(1)},\ldots,x_{\pi^{-1}(n)}).
\end{equation}

If we specialize to the case where $X_i=\p_{k_i}$, then $\alpha$, together with the associativity isomorphisms of (\ref{eqn:standard-assoc}), gives maps
\begin{equation}
\label{map:phi}
\Psi_{k_1,\ldots,k_n}: \p_n\times \p_{k_1}\times\cdots\times \p_{k_n}\to\symjoin(\p_{k_1},\ldots,\p_{k_n})=\p_{k_1+\cdots+k_n}
\end{equation}
which, by (\ref{eqn:alpha}) and (\ref{eqn:standard-assoc}), are given by the formula
\begin{equation}
\label{map:phi2}
\Psi_{k_1,\ldots,k_n}(f;g_1,\ldots,g_n) = f\langle g_1\circ i_{f^{-1}(1)},\ldots, g_n\circ i_{f^{-1}(n)}\rangle \ \in \p(\ord{k},\ord{k_1+\cdots+k_n})
\end{equation}
for $(f;g_1,\ldots,g_n)\in\p(\ord{k},\ord{n})\times\p(\ord{k},\ord{k_1})\times\cdots\times\p(\ord{k},\ord{k_n})$.

To discuss operads and their coactions we need the following lemma.

\begin{proposition}
\label{prop:atojoperadic}
Let $X_{1,*},\ldots,X_{n,*}$ be sequences of $\p$-sets as in Lemma \ref{lemma:associative}. Then the following diagram commutes
\begin{equation*}
\xymatrix{
A(A(X_{1,*}),\ldots,A(X_{n,*})) \ar[r]^{A(\alpha,\ldots,\alpha)}\ar[dd]_{\Psi_{k_1,\ldots,k_n}} & A(\symjoin(X_{1,*}),\ldots,\symjoin(X_{n,*})) \ar[d]^\alpha\\
 & \symjoin(\symjoin(X_{1,*}),\ldots,\symjoin(X_{n,*}))\ar[d]^{\Theta}\\
A(X_{*,*})\ar[r]_\alpha & \symjoin(X_{*,*})
}
\end{equation*}
\end{proposition}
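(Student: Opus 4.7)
The plan is to chase a typical element
$$\bigl(f;\,(g_1;y_{1,1},\ldots,y_{1,k_1}),\ldots,(g_n;y_{n,1},\ldots,y_{n,k_n})\bigr)\in A(A(X_{1,*}),\ldots,A(X_{n,*}))(k),$$
with $f\in\p(\ord{k},\ord{n})$ and $g_i\in\p(\ord{k},\ord{k_i})$, around both routes of the diagram, and verify that the two resulting elements of $\symjoin(X_{*,*})(k)$ coincide summand-by-summand and component-by-component.

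On the top-right route, $A(\alpha,\ldots,\alpha)$ replaces each $(g_i;y_{i,*})$ by $\alpha(g_i;y_{i,*})\in\symjoin(X_{i,*})(k)$, which by (\ref{eqn:alpha}) lies in the summand indexed by the underlying set map of $g_i$ and whose $j$-th factor is $y_{i,j}\circ i_{g_i^{-1}(j)}$. The outer $\alpha$ then deposits the result in the summand of $\symjoin(\symjoin(X_{1,*}),\ldots,\symjoin(X_{n,*}))(k)$ indexed by $f$, with $i$-th component $\alpha(g_i;y_{i,*})\circ i_{f^{-1}(i)}$. Expanding this restriction via the join structure map (\ref{eqn:joinstruct}), the $i$-th component lies in the summand indexed by $g_i\circ i_{f^{-1}(i)}$ and its $j$-th factor equals $y_{i,j}\circ i_{g_i^{-1}(j)}\circ(i_{f^{-1}(i)})^{g_i^{-1}(j)}$, which by (\ref{eq4}) collapses to $y_{i,j}\circ i_{f^{-1}(i)\cap g_i^{-1}(j)}$. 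The associativity isomorphism $\Theta$ of Lemma~\ref{lemma:associative} acts on summands via (\ref{eqn:standard-assoc}), so it concatenates the outer index $f$ with the inner indices $g_i\circ i_{f^{-1}(i)}$ into
$$h=f\langle g_1\circ i_{f^{-1}(1)},\ldots,g_n\circ i_{f^{-1}(n)}\rangle,$$
leaving the components $y_{i,j}\circ i_{f^{-1}(i)\cap g_i^{-1}(j)}$ untouched.

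On the bottom-left route, $\Psi_{k_1,\ldots,k_n}$ acts on the operadic data by formula (\ref{map:phi2}) and produces exactly the same $h\in\p(\ord{k},\ord{k_1+\cdots+k_n})$, while passing the $y_{i,j}$ through unchanged. A further application of $\alpha$ then places the result in the summand of $\symjoin(X_{*,*})(k)$ indexed by the underlying set map of $h$, with factor at position $(i,j)$ equal to $y_{i,j}\circ i_{h^{-1}(i,j)}$. Here $(i,j)$ denotes the position in $\ord{k_1+\cdots+k_n}$ obtained from the $j$-th entry of the $i$-th block. The proof concludes by noting that, directly from the definition of the bracket $\langle-\rangle$, the fibre of $h$ over position $(i,j)$ is $f^{-1}(i)\cap g_i^{-1}(j)$, so both routes output the same element.

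The main obstacle is purely organisational: matching the nested summand indices of the iterated join with the single index $h$, and verifying that the fibres of $h=f\langle g_1\circ i_{f^{-1}(1)},\ldots,g_n\circ i_{f^{-1}(n)}\rangle$ are intersections of the fibres of its constituents. Once that bookkeeping is in place, commutativity follows from repeated, and essentially mechanical, application of the identities (\ref{eq1}) and (\ref{eq4}).
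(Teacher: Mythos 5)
Your proof is correct and follows essentially the same route as the paper's: both chase the summand indexed by $(f;g_1,\ldots,g_n)$ around the two sides, identify the common target summand $h=f\langle g_1\circ i_{f^{-1}(1)},\ldots,g_n\circ i_{f^{-1}(n)}\rangle$, and reduce the comparison of factors to the identity (\ref{eq4}) together with the set equality $h^{-1}(i,j)=f^{-1}(i)\cap g_i^{-1}(j)$. No gaps; the only cosmetic difference is that the paper phrases the factor comparison as an equality of the two $\p$-morphisms $\xi_1=\xi_2$ acting on $X_{i,j}$ rather than tracing elements $y_{i,j}$.
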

The proof of this result is postponed until the last section.

We can now define the symmetric join operad in the category of $\p$-sets. 

\begin{theorem}
\label{prop:psetoperad}
The sequence of $\p$-sets $\{\p_n\}_{n\geq 0}$ forms an operad (non-unital, with permutations) in the symmetric monoidal category of $\p$-sets $(\sets^{\p^\op},\times,\ast)$. The structure maps of the operad are the $\Psi_{k_1,\ldots,k_n}$ of (\ref{map:phi}). The right action of $\Sigma_n$ on $\p_n$ is given by
$$f\circ\pi =\pi^{-1}f$$
for $f\in \p_n(k)$ and $\pi\in\Sigma_n$.
\end{theorem}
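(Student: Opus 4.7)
The theorem asserts four things about $\{\p_n\}$: (i) each $\Psi_{k_1,\ldots,k_n}$ is a morphism of $\p$-sets, (ii) operadic associativity holds, (iii) $f\circ\pi = \pi^{-1}f$ is a well-defined right $\Sigma_n$-action on the $\p$-set $\p_n$, and (iv) the $\Sigma$-equivariance of $\Psi$. Points (i) and (iii) are formal: (i) because $\Psi_{k_1,\ldots,k_n} = \Theta\circ\alpha$ is the composite of two $\p$-set maps; and (iii) because the action of $\Sigma_n\subseteq\p(\ord{n},\ord{n})$ on $\p_n(k) = \p(\ord{k},\ord{n})$ by pre-composition with $\pi^{-1}$ commutes with the $\p$-set structure (which is governed by post-composition from below) and satisfies $(\pi\sigma)^{-1}f = \sigma^{-1}(\pi^{-1}f)$.

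For (ii), operadic associativity is precisely Proposition \ref{prop:atojoperadic} specialized to $X_{i,j} = \p_{l_{i,j}}$. Under this specialization the isomorphisms $\Theta$ of (\ref{eqn:standard-assoc}) identify $\symjoin(\p_{l_{i,*}})$ with $\p_{m_i}$ (where $m_i = \sum_j l_{i,j}$) and similarly for the outer and the combined joins, so that each instance of $\Theta\circ\alpha$ in the diagram becomes an operadic structure map $\Psi$; the two paths around the square then express the two bracketings of a triple operadic composition, which therefore agree.

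For (iv), the \emph{inner} equivariance (for $\tau_i\in\Sigma_{k_i}$ acting on the $i$-th input) reduces via formula (\ref{map:phi2}) to the identity
\[
f\langle\tau_1^{-1}h_1,\ldots,\tau_n^{-1}h_n\rangle = (\tau_1\oplus\cdots\oplus\tau_n)^{-1}\circ f\langle h_1,\ldots,h_n\rangle,
\]
which follows directly from the defining property of $\langle\cdot\rangle$ (both sides agree on each block $f^{-1}(i)$). The \emph{outer} equivariance (for $\pi\in\Sigma_n$) follows from the $\Sigma_n$-equivariance of $\alpha$ stated in (\ref{eq:alphaequivariant}), combined with the observation that the symmetry isomorphism $T_\pi$ of Lemma \ref{lem:equiv-join}, conjugated by $\Theta$, becomes precisely the block permutation in $\Sigma_{\sum k_i}$ associated to $\pi$ and block sizes $(k_1,\ldots,k_n)$. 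This last identification---tracking $T_\pi$ through the decomposition $h=\phi\langle g_1,\ldots,g_n\rangle$ that defines $\Theta$---is the main bookkeeping obstacle of the proof; once it is in place, the outer equivariance axiom holds with exactly the expected block permutation.
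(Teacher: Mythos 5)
Your handling of the formal points and of both equivariance axioms essentially reproduces the paper's own argument: the paper likewise deduces outer equivariance from (\ref{eq:alphaequivariant}) together with the $\Sigma_n$-equivariance of $\Theta$ (your block-permutation identification, which the paper also asserts rather than computes), and your direct check of inner equivariance via the bracket identity is a legitimate replacement for the paper's appeal to naturality of $\alpha$ plus the $(\Sigma_{k_1}\times\cdots\times\Sigma_{k_n})$-equivariance of (\ref{eqn:standard-assoc}).

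The gap is in associativity. Proposition \ref{prop:atojoperadic}, specialized to $X_{i,j}=\p_{l_{i,j}}$, asserts only the equality
\[
\Theta\circ\alpha\circ A(\alpha,\ldots,\alpha)\;=\;\alpha\circ\Psi_{k_1,\ldots,k_n}\colon\quad A(A(\p_{l_{1,*}}),\ldots,A(\p_{l_{n,*}}))\longrightarrow \symjoin(\p_{l_{*,*}}),
\]
whose target is the iterated join $\symjoin(\p_{l_{*,*}})$, not $\p_{\sum l_{*,*}}$, and in which the inner joins $\symjoin(\p_{l_{i,*}})$ are never collapsed to $\p_{m_i}$ (where $m_i=\sum_j l_{i,j}$): the inner maps $\alpha$ occur without accompanying $\Theta$'s, and the bottom edge lacks its final $\Theta$. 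So it is not the case that ``each instance of $\Theta\circ\alpha$ in the diagram becomes an operadic structure map $\Psi$''; the two paths of this square are not yet the two bracketings of a triple operadic composition. To convert the proposition into the associativity axiom you must post-compose with $\Theta\colon\symjoin(\p_{l_{*,*}})\to\p_{\sum l_{*,*}}$ and then push the identifications $A(\Theta^n)$ and $\symjoin(\Theta^n)$ through the diagram, and this needs two facts absent from your proposal: first, naturality of $\alpha$ applied to the morphisms $\Theta\colon\symjoin(\p_{l_{i,*}})\to\p_{m_i}$, giving $\alpha\circ A(\Theta^n)=\symjoin(\Theta^n)\circ\alpha$; and second, the coherence of $\Theta$ itself,
\[
\Theta\circ\symjoin(\Theta,\ldots,\Theta)\;=\;\Theta\circ\Theta\colon\quad \symjoin(\symjoin(\p_{l_{1,*}}),\ldots,\symjoin(\p_{l_{n,*}}))\longrightarrow\p_{\sum l_{*,*}},
\]
which unwinds to the bracket identity
\[
\bigl(f\langle g_1,\ldots,g_n\rangle\bigr)\langle h_{1,1},\ldots,h_{n,k_n}\rangle \;=\; f\langle\, g_1\langle h_{1,*}\rangle,\ldots,g_n\langle h_{n,*}\rangle\,\rangle .
\]
This last identity is where the real content of associativity sits; it is exactly the ``bottom right square'' that the paper verifies explicitly, the other two cells of its diagram being Proposition \ref{prop:atojoperadic} and the naturality square just described. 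Your phrase ``which therefore agree'' thus assumes what is to be proved. The gap is fillable --- both missing facts are true and elementary --- but they must be stated and checked rather than absorbed into the specialization.
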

Again, the proof can be found in Section \ref{sect:appendix}. The most tedious part is the verification of the operadic associativity axiom. The operad has no unit since there is no map of $\p$-sets $\ast\to\p_1$.

Until now we have constructed an operad of $\p$-sets which governs the relations between the canonical maps of the symmetric join $\symjoin$. In addition to the operad structure the transformation $\alpha$, precomposed with the $n$-fold diagonal $X\to X\times \cdots\times X$, yields for every $\p$-set $X$ and every $n\geq 0$ maps of $\p$-sets
\begin{equation}
\label{osigma-coaction-nonrel}
\Psi_n^X:\p_n\times X \to \symjoin(X^n)
\end{equation}
where $\symjoin(X^n)=\symjoin(X,\ldots,X)$.

\subsection{The induced operads of pairs.} 

We now explain how the operad $\{\p_n\}_{n\geq 0}$ gives rise to an operad in the category of pairs of $\p$-sets and, via the forgetful functor $I$, an operad in the category of pairs of $\niceo$-sets. 

First, given a sequence $X_1,\ldots,X_n$ of $\niceo$-sets or $\p$-sets we define the \emph{relative join} as the pair
$$(J(X_1,\ldots,X_n),\partial J(X_1,\ldots,X_n))$$
for $J=\ojoin$ or $J=\symjoin$ where $\partial J(X_1,\ldots,X_n)$ is the subobject consisting of the summands in (\ref{ojoin2}) or (\ref{symjoin2}) indexed by non-surjective maps $\phi$. In other words those are the simplices of the join which do not contain a proper face from at least one of the factors.

The maps $\Psi_{k_1,\ldots,k_n}$ of (\ref{map:phi}) clearly induce maps of relative $\p$-sets
\begin{eqnarray}
\label{partialmaps}
\Psi_{k_1,\ldots,k_n}: & (\p_n,\partial \p_n)\times (\p_{k_1},\partial \p_{k_1})\times\cdots\times(\p_{k_n},\partial \p_{k_n})\to\\
\nonumber &\to(\p_{k_1+\cdots+k_n},\partial\p_{k_1+\cdots+k_n})
\end{eqnarray}
and it is an immediate corollary of Theorem \ref{prop:psetoperad} that $\{(\p_n,\partial \p_n)\}_{n\geq 0}$ is an operad in the category of pairs. Moreover, in the relative context the map $\alpha$ induces a map
\begin{equation}
\label{relativealpha}
\alpha:(\p_n,\partial\p_n)\times (X_1,X_1(0))\times\cdots\times(X_n,X_n(0))\to(\symjoin(X_*),\partial\symjoin(X_*))
\end{equation}
which specializes to a relative version of (\ref{osigma-coaction-nonrel})
\begin{equation}
\label{osigma-coaction}
\Psi_n^X:(\p_n,\partial\p_n)\times (X,X(0))\to(\symjoin(X^n),\partial\symjoin(X^n)).
\end{equation}

\section{The chain operad}
\label{section:4}

In this section we explain how to construct an $E_\infty$-operad $\frakj$ in the category of chain complexes from the symmetric join operad.  We go on to explain how the maps $\Psi_n^X$ of (\ref{osigma-coaction}) give a coaction of the operad $\frakj$ on the chain complex $C_*(Y)$ of a simplicial set $Y$. 

\subsection{Conventions regarding chain complexes.} 
We fix once for all some commutative ground ring $k$. Let $(\ch,\otimes,k)$ denote the symmetric monoidal category of chain complexes of $k$-modules, with differential of degree $-1$. The symmetry operator is $$T(x\otimes y)=(-1)^{\deg(x)\deg(y)}y\otimes x.$$

For a chain complex $(C,d)$ let $\chsusp^n C$ denote the $n$-fold chain suspension, i.e. the chain complex with $(\chsusp^n C)_m=C_{m-n}$ and differential $d(\chsusp^n x)=(-1)^n\chsusp^ndx$. For any chain complexes $C$ and $D$ we have an isomorphism
$$\chsusp^n C\otimes \chsusp^m D=\chsusp^{n+m}(C\otimes D)$$
given by $\chsusp^nx\otimes\chsusp^m y\to(-1)^{m\deg(x)}\chsusp^{n+m}(x\otimes y)$.

If $Y$ is a simplicial set then $C_*(Y)$ denotes its usual chain complex.
The functor
$$C_*(-):(\sets^{\Delta^\op},\times,\ast)\to(\ch,\otimes,k)$$
is lax-monoidal with the natural transformation
$$C_*(Y_1)\otimes C_*(Y_2)\xrightarrow{EZ} C_*(Y_1\times Y_2)$$
given by the Eilenberg-Zilber map \cite[Def.29.7]{May}.

\subsection{Chain complexes of $\niceo$-sets.}
If $X$ is an $\niceo$-set we define the augmented chain complex $C_*^\niceo(X)$ by
$$C_n^\niceo (X)=k[X(n)]$$
with differential $dx=\sum_{i=1}^n(-1)^id_ix$ for $x\in X(n)$. If $(X,X')$ is an $\niceo$-set pair then we can form the relative chain complex $C_*^\niceo(X,X')$ in the usual way.

Recall the adjoint functors $U:\sets^{\niceo^\op}\rightleftarrows\sets^{\Delta^\op}:(-)_+$ between $\niceo$-sets and $\Delta$-sets. The sign conventions associated to suspensions imply that
\begin{eqnarray}
\label{eqn:reduced}
C_*^\niceo(X,X(0))&=&\chsusp C_*(UX)\quad \textrm{ for any } \niceo\textrm{-set } X\\
\nonumber C_*^\niceo(Y_+,Y_+(0))&=&\chsusp C_*(Y)\quad\textrm{ for any } \Delta\textrm{-set } Y
\end{eqnarray}

\textbf{Remark.} The last isomorphism holds for \emph{any} functorial augmentation $Y_+$ of simplicial sets. In fact we will see that the choice of augmentation will not affect the final outcome of the constructions of this section. This is not surprising, since we are trying to produce chain level maps for $\Delta$-sets, while $\niceo$-sets and $\p$-sets serve only as an intermediate tool.

We also have
\begin{eqnarray}
\label{eqn:ojoinmonoidal}
C_*^\niceo(\ojoin(X_1,\ldots,X_n))&=&\bigotimes_{i=1}^n C_*^\niceo(X_i)\\
\nonumber C_*^\niceo(\ojoin(X_1,\ldots,X_n),\partial\ojoin(X_1,\ldots,X_n))&=&\bigotimes_{i=1}^n C_*^\niceo(X_i,X_i(0))
\end{eqnarray}
for any sequence $X_1,\ldots,X_n$ of $\niceo$-sets. In other words, the functor $C_*(-):(\sets^{\niceo^\op},\ojoin,\niceo_0)\to(\ch,\otimes,k)$ is monoidal. On the other hand, using the isomorphisms (\ref{eqn:reduced}), we see that the Eilenberg-Zilber map induces in the augmented context a transformation
\begin{equation}
\label{eqn:ez-monoidal}
\chsusp^{-1}C_*^\niceo(X_1,X_1(0))\otimes\chsusp^{-1}C_*^\niceo(X_2,X_2(0))\xrightarrow{EZ}\chsusp^{-1}C_*^\niceo(X_1\times X_2,X_1(0)\times X_2(0)).
\end{equation}
which makes the functor $\chsusp^{-1}C_*^\niceo(-,-(0)):(\sets^{\niceo^\op},\times,\ast)\to(\ch,\otimes,k)$ lax-monoidal.

\subsection{Chain complexes of $\p$-sets.}
If $Z$ is an $\p$-set we will continue to write $C_*^\niceo(Z)$ for the chain complex of the underlying $\niceo$-set $IZ$. Recall from \ref{subsect:homotopy} that for any $\niceo$-set $X$ the standard inclusion $\eta_X:X\to I(X\Sigma)$ is a weak equivalence. There is no natural inverse map $X\Sigma\to X$ but a suitable inverse exists at the level of chain complexes.

\begin{proposition}
\label{prop:strange}
For every $\niceo$-set $X$ the assignment 
$$(x,\pi)\to\sgn(\pi)x$$
induces a natural map of chain complexes
\begin{equation}
s_X:C_*^\niceo(X\Sigma)\to C_*^\niceo(X)
\end{equation}
which gives a quasi-isomorphism
$C_*^\niceo(X\Sigma,X\Sigma(0))\xrightarrow{\sim} C_*^\niceo(X,X(0))$.
\end{proposition}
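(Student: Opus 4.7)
The proof has three stages. Stage 1 verifies that $s_X$ is a natural chain map. Using the face formula \eqref{eqn:di}, $d_i(x,\pi)=(d_{\pi(i)}x,d_i\pi)$, the check $s_X\circ d = d\circ s_X$ reduces, after reindexing $j=\pi(i)$ on the outgoing side, to the sign identity
\[
\sgn(d_i\pi) = \sgn(\pi)\cdot(-1)^{\pi(i)-i},
\]
a standard combinatorial fact: the inversions of $\pi$ involving position $i$ have total parity $\pi(i)-i$, since exactly $\pi(i)-1$ of the other values lie below $\pi(i)$. Naturality of $s_X$ in $X$ is automatic because $f\Sigma$ acts by $f$ on the $X$-factor and trivially on the $\Sigma_n$-factor.

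Stage 2 is the quasi-isomorphism on absolute chain complexes. The decisive observation is that $s_X$ is a left inverse of the adjunction unit $\eta_X\colon X\to I(X\Sigma)$ given by $\eta_X(x)=(x,\id)$: indeed $s_X\eta_X(x)=\sgn(\id)\,x=x$. By \S\ref{subsect:homotopy} the realization $|\eta_X|$ is a homotopy equivalence; via the identification \eqref{eqn:reduced}, this forces $(\eta_X)_*$ to be an isomorphism on the homology of $C_*^\niceo(-,-(0))$, and hence also on the absolute $C_*^\niceo$ by the long exact sequence of the pair, noting that $\eta_X$ is the obvious bijection on the degree-$0$ parts. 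Combining $(s_X)_*\circ(\eta_X)_*=\id$ with $(\eta_X)_*$ being an isomorphism gives that $(s_X)_*$ is an isomorphism too, so $s_X$ is a quasi-isomorphism on absolute complexes.

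Stage 3 descends to the relative setting. Since $\Sigma_0$ is trivial, $X\Sigma(0)=X(0)$ and $s_X$ restricts to the identity in degree $0$. The five-lemma applied to the homology long exact sequences of the pairs $(X\Sigma,X\Sigma(0))$ and $(X,X(0))$---with the identity on the degree-$0$ part and the Stage~2 quasi-isomorphism on the absolute complexes---yields that $s_X$ induces a quasi-isomorphism on the relative complexes.

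I expect the main obstacle to be the sign identity in Stage 1; Stages 2 and 3 are formal consequences, once that identity together with the homotopy equivalence $|\eta_X|$ from \S\ref{subsect:homotopy} and the retraction $s_X\circ\eta_X=\id$ are all in hand.
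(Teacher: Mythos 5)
Your proposal is correct and follows essentially the same route as the paper: the same sign identity $\sgn(d_i\pi)=(-1)^{i+\pi(i)}\sgn(\pi)$ for the chain-map check, and the same key observation that $s_X$ is a retraction of the unit $\eta_X$, which is a quasi-isomorphism by \S\ref{subsect:homotopy}. The only difference is that the paper runs the retraction argument directly on the relative complexes---the factorization $s_X\circ C_*^\niceo(\eta_X)=\id$ already holds at the level of $C_*^\niceo(-,-(0))$---so your detour through the absolute complexes and the two five-lemma applications, while valid, is unnecessary.
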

\begin{proof}

Let us first verify that $s=s_X$ is indeed a map of chain complexes. We have
\begin{align*}
ds(x,\pi)=& \sgn(\pi)dx=\sgn(\pi)\sum_{i=1}^n(-1)^id_ix.
\end{align*}
Using (\ref{eqn:di}) and the easy formula
\begin{equation}
\label{eqn:signs}
\sgn(d_i\pi)=(-1)^{i+\pi(i)}\sgn(\pi)
\end{equation}
we verify that
\begin{align*}
sd(x,\pi)=&\ s\sum_{i=1}^n(-1)^i(d_{\pi(i)}x,d_i\pi)=\\
=&\ \sum_{i=1}^n(-1)^i\sgn(d_i\pi)d_{\pi(i)}x=\\
=&\ \sgn(\pi)\sum_{i=1}^n(-1)^{\pi(i)}d_{\pi(i)}x=\ \sgn(\pi)\sum_{i=1}^n(-1)^{i}d_{i}x=ds(x,\pi)
\end{align*}
so the claim is proved. 

The map $s_X$ is clearly natural. To prove that the map of relative complexes is a quasi-isomorphism note that the identity of $C_*^\niceo(X,X(0))$ factors as
$$C_*^\niceo(X,X(0))\xrightarrow{C_*^\niceo(\eta_X)}C_*^\niceo(X\Sigma,X\Sigma(0))\xrightarrow{s_X}C_*^\niceo(X,X(0)),$$
and $C_*^\niceo(\eta_X)$ is a quasi-isomorphism by the results of Section \ref{subsect:homotopy}.

\end{proof}

Consider now the chain morphism $s_X$ in the special case when $X=\ojoin(X_1,\ldots,X_n)$. Then by Lemma \ref{lem:symmetrization} and (\ref{eqn:ojoinmonoidal}) $s_X$ can be identified with the map
$$C_*^\niceo(\symjoin(X_*\Sigma))=C_*^\niceo(\ojoin(X_*)\Sigma)\xrightarrow{s_{X}}C_*^\niceo(\ojoin(X_*))=\bigotimes_{i=1}^nC_*^\niceo(X_i).$$
\begin{proposition}
\label{strange:equivariance}
For any $\niceo$-sets $X_1,\ldots,X_n$ the maps $$s_{\ojoin(X_*)}:C_*^\niceo(\symjoin(X_1\Sigma,\ldots,X_n\Sigma))\to\bigotimes_{i=1}^nC_*^\niceo(X_i)$$ 
are $\Sigma_n$-equivariant.
\end{proposition}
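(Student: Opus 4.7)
The plan is to spell out what $\Sigma_n$-equivariance means in this context, evaluate both compositions of the relevant square on a generic element, and reduce the statement to a classical sign identity for block permutations.

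First I would fix the meaning of equivariance: for each $\sigma \in \Sigma_n$ the content of the proposition is the commutativity of
\[
\xymatrix{
C_*^\niceo(\symjoin(X_1\Sigma,\ldots,X_n\Sigma)) \ar[r]^-{T_\sigma} \ar[d]_-{s} & C_*^\niceo(\symjoin(X_{\sigma^{-1}(1)}\Sigma,\ldots,X_{\sigma^{-1}(n)}\Sigma)) \ar[d]^-{s} \\
\bigotimes_{i=1}^n C_*^\niceo(X_i) \ar[r]_-{\tilde\sigma} & \bigotimes_{i=1}^n C_*^\niceo(X_{\sigma^{-1}(i)})
}
\]
where $\tilde\sigma$ is the signed permutation coming from the symmetric monoidal structure of $(\ch,\otimes,k)$ and I have abbreviated the two instances of $s$ from Proposition \ref{prop:strange}.

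Next I would use Lemma \ref{lem:symmetrization} to represent a generator of the upper-left term in degree $k$ as a pair $((x_1,\ldots,x_n),\pi) \in \ojoin(X_*)(k) \times \Sigma_k$, with $x_i \in X_i(a_i)$ and $a_1+\cdots+a_n = k$, and then trace the two routes. Going right-then-down, the formula for $T_\sigma$ recalled after Lemma \ref{lem:equiv-join} together with the definition of $s$ yields
\[
\sgn(\sigma_{a_1,\ldots,a_n})\,\sgn(\pi)\,x_{\sigma^{-1}(1)}\otimes\cdots\otimes x_{\sigma^{-1}(n)}.
\]
Going down-then-right, $s$ first produces $\sgn(\pi)\,x_1\otimes\cdots\otimes x_n$, and then $\tilde\sigma$ reorders the tensor factors, multiplying by the Koszul sign $\epsilon(\sigma;a_1,\ldots,a_n)$ built from the degrees $a_1,\ldots,a_n$.

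The remaining task is the sign identity $\sgn(\sigma_{a_1,\ldots,a_n}) = \epsilon(\sigma;a_1,\ldots,a_n)$. Both sides are group homomorphisms $\Sigma_n \to \{\pm 1\}$ in $\sigma$ for fixed block sizes, so it suffices to verify it on adjacent transpositions, where the block swap of sizes $a$ and $b$ has sign $(-1)^{ab}$, in agreement with the Koszul sign for swapping two tensor factors of degrees $a$ and $b$. The main obstacle, modest but essential, is careful sign bookkeeping and precise use of the identification of Lemma \ref{lem:symmetrization}, so that the permutation factor produced by $T_\sigma$ is correctly recognized as the block permutation $\sigma_{a_1,\ldots,a_n}\in\Sigma_k$; once this identification is made, everything reduces to the sign identity above and is formal.
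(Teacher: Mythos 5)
Your setup is sound and matches the skeleton of a correct proof: evaluating both routes on a generator $[(x_1,\ldots,x_n),\pi]$ does reduce the proposition to the identity $\sgn(\sigma_{a_1,\ldots,a_n})=\epsilon(\sigma;a_1,\ldots,a_n)$. The gap is in your justification of that identity. For \emph{fixed} block sizes $(a_1,\ldots,a_n)$ the map $\sigma\mapsto\sgn(\sigma_{a_1,\ldots,a_n})$ is not a group homomorphism $\Sigma_n\to\{\pm 1\}$, and neither is the Koszul sign: the composition law is $(\tau\sigma)_{a_1,\ldots,a_n}=\tau_{a_{\sigma^{-1}(1)},\ldots,a_{\sigma^{-1}(n)}}\circ\sigma_{a_1,\ldots,a_n}$, so after applying $\sigma$ the block sizes (respectively the degrees of the tensor factors) are permuted, and the signs compose as a cocycle, not multiplicatively with fixed weights. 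Concretely, take $n=3$, $(a_1,a_2,a_3)=(1,2,1)$, $\sigma=(1\,2)$, $\tau=(2\,3)$. Then $\sgn(\sigma_{1,2,1})=(-1)^{a_1a_2}=+1$ and $\sgn(\tau_{1,2,1})=(-1)^{a_2a_3}=+1$, but $\tau\sigma$ has inversions $(1,2)$ and $(1,3)$, so $\sgn((\tau\sigma)_{1,2,1})=(-1)^{a_1a_2+a_1a_3}=-1$. Hence ``both sides are homomorphisms, so check adjacent transpositions'' does not establish the identity as you have framed it.

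The repair is easy and can be done in two ways. (i) Prove the sign identity directly: counting inversions of a block permutation gives $\sgn(\sigma_{a_1,\ldots,a_n})=(-1)^{\sum a_ia_j}$, the sum running over pairs $i<j$ with $\sigma(i)>\sigma(j)$, and this is precisely the Koszul sign $\epsilon(\sigma;a_1,\ldots,a_n)$; no reduction to generators is then needed. (ii) Alternatively, perform the reduction to adjacent transpositions at the level of the commuting \emph{square} rather than the numerical identity: since $T_{\pi\sigma}=T_\pi\circ T_\sigma$ (Lemma \ref{lem:equiv-join}) and the symmetry isomorphisms of $(\ch,\otimes,k)$ satisfy the same composition law (with the intermediate degrees permuted accordingly), commutativity of the square for $\sigma$ and for $\pi$ implies it for $\pi\sigma$, so a single swap of two adjacent factors suffices; by associativity (Lemma \ref{lemma:associative}) that case is the two-factor swap, where $\sgn(\tau_{p,q})=(-1)^{pq}$ closes the argument. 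Route (ii) is exactly the paper's proof: it invokes associativity to reduce to the join of two objects and then carries out your $n=2$ computation.
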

\begin{proof}
Due to obvious associativity it suffices to check the claim for the join of two objects, that is to verify the commutativity of the diagram
\begin{equation*}
\xymatrix{
C_*^\niceo(\symjoin(X\Sigma,Y\Sigma)) \ar[r]^{s_{\ojoin(X,Y)}} \ar[d]_{C_*^\niceo(T_{X\Sigma,Y\Sigma})} & C_*^\niceo(X)\otimes C_*^\niceo(Y)\ar[d]^{T_{C_*^\niceo(X),C_*^\niceo(Y)}}\\
C_*^\niceo(\symjoin(Y\Sigma,X\Sigma))\ar[r]^{s_{\ojoin(Y,X)}} & C_*^\niceo(Y)\otimes C_*^\niceo(X).\\
}
\end{equation*}
If an element of $\symjoin(X\Sigma,Y\Sigma)(k)=\ojoin(X,Y)\Sigma(k)$ is represented by the triple $(x_p,y_q,\pi)$ with $p+q=k$ then 
$$T s(x,y,\pi)=\sgn(\pi)T(x\otimes y)=(-1)^{pq}\sgn(\pi)y\otimes x$$
while (see the discussion following Lemma \ref{lem:equiv-join}) 
$$s T(x,y,\pi)=s(y,x,\tau_{p,q}\pi)=\sgn(\tau_{p,q})\sgn(\pi)y\otimes x$$
and the two values are equal because $\sgn(\tau_{p,q})=(-1)^{pq}$.
\end{proof}

\subsection{Construction of the chain operad.}
\label{subsect:constr}
The structure maps of the operad $\{(\p_n,\partial\p_n)\}_{n\geq 0}$ of pairs of $\p$-sets (\ref{partialmaps}) induce, via the lax-monoidal functor $\chsusp^{-1}C_*^\niceo(-)$ of (\ref{eqn:ez-monoidal}), maps of chain complexes
\begin{eqnarray}
\fraka_{k_1,\ldots,k_n}:&\chsusp^{-1}C_*^\niceo(\p_n,\partial\p_n)\otimes\bigotimes_{i=1}^n\chsusp^{-1}C_*^\niceo(\p_{k_i},\partial\p_{k_i})\to \\
\nonumber &\to\chsusp^{-1}C_*^\niceo(\p_{k_1+\cdots+k_n},\partial\p_{k_1+\cdots+k_n})
\end{eqnarray}
which are therefore the structure maps of a chain operad
$$\fraka(n)=\chsusp^{-1}C_*^\niceo(\p_n,\partial\p_n),\qquad {n\geq 0}.$$
In particular $\fraka(0)=\chsusp^{-1}k$. It is also easy to check that if $\id\in\p(\ord{1},\ord{1})$ denotes the unique morphism then $\chsusp^{-1}\id\in\fraka(1)_0$ is the unit of this operad. 

Recall that for any $\p$-set we have the maps of (\ref{osigma-coaction}). Now let $X$ be an $\niceo$-set. Consider the following composition $\fraka_n^X$.

\begin{eqnarray*}
\fraka^X_n:\fraka(n)\otimes C_*^\niceo(X,X(0))&=&\ \chsusp^{-1}C_*^\niceo(\p_n,\partial\p_n)\otimes C_*^\niceo(X,X(0))\\
&\myrightarrow{1\otimes{\eta_X}_*}&\ \chsusp^{-1}C_*^\niceo(\p_n,\partial\p_n)\otimes C_*^\niceo(X\Sigma,X\Sigma(0))\\
&\myrightarrow{EZ}&\ C_*^\niceo((\p_n,\partial\p_n)\times (X\Sigma,X\Sigma(0)))\\
&\myrightarrow{{\Psi^{X\Sigma}_n}_*}&\ C_*^\niceo(\symjoin(X\Sigma^n),\partial\symjoin(X\Sigma^n))\\
&=&\ C_*^\niceo(\ojoin(X^n)\Sigma,\partial\ojoin(X^n)\Sigma)\\
&\myrightarrow{s_{\ojoin(X^n)}}&\ C_*^\niceo(\ojoin(X^n),\partial\ojoin(X^n))\\
&=&\ C_*^\niceo(X,X(0))^{\otimes n}
\end{eqnarray*}

In summary, we first enlarge $X$ to the $\p$-set $X\Sigma$, apply the simplicial `coaction' maps of (\ref{osigma-coaction}) and pass back to the non-symmetric context using Proposition \ref{prop:strange}.

\begin{proposition} 
\label{pre-operad}
For any $\niceo$-set $X$ the maps
$$\fraka^X_n:\fraka(n)\otimes C_*^\niceo(X,X(0))\to C_*^\niceo(X,X(0))^{\otimes n}$$
equip $C_*^\niceo(X,X(0))$ with a natural structure of a coalgebra over the operad $\{\fraka(n)\}_{n\geq 0}$.
\end{proposition}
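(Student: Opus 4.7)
The proposition asserts two things about the maps $\fraka_n^X$: $\Sigma_n$-equivariance, and compatibility with operad composition (the coalgebra associativity axiom). The plan is to decompose $\fraka_n^X$ into its five constituent natural transformations---$1 \otimes (\eta_X)_*$, the Eilenberg-Zilber map, ${\Psi^{X\Sigma}_n}_*$, the identification $\symjoin((X\Sigma)^n) = \ojoin(X^n)\Sigma$, and $s_{\ojoin(X^n)}$---and to reduce both properties to the corresponding statements at the $\p$-set level, which are already established in the preceding development.

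Equivariance is relatively quick. I would check each step of the composition separately: $(\eta_X)_*$ and the Eilenberg-Zilber map are naturally $\Sigma_n$-equivariant when $\Sigma_n$ acts only on the $\p_n$ factor; ${\Psi^{X\Sigma}_n}_*$ is equivariant by equation (\ref{eq:alphaequivariant}) together with the symmetry isomorphism $T_\pi$ of Lemma \ref{lem:equiv-join}; the identification via Lemma \ref{lem:symmetrization} respects $\Sigma_n$-actions on both sides; and $s_{\ojoin(X^n)}$ is equivariant by Proposition \ref{strange:equivariance}. Composing these gives equivariance of $\fraka_n^X$.

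The associativity axiom is the substantive part. I must verify that the two compositions $\fraka(n) \otimes \bigotimes_{i=1}^n \fraka(k_i) \otimes C_*^\niceo(X,X(0)) \to C_*^\niceo(X,X(0))^{\otimes(k_1+\cdots+k_n)}$, obtained respectively by operad composition followed by coaction and by coaction via $\fraka_n^X$ followed by coactions on each tensor factor, are equal. The core of the argument is Proposition \ref{prop:atojoperadic} applied with $X_{i,*} = (X\Sigma,\ldots,X\Sigma)$ ($k_i$ entries), which encodes exactly the desired associativity at the $\p$-set level, precomposed with the appropriate iterated diagonals of $X$. To lift this identity from $\p$-sets to chain complexes, I would paste onto it commutative rectangles expressing: naturality of $\eta_X$ and its compatibility with the diagonal $X \to X^{k_1+\cdots+k_n}$; naturality and iterated associativity of the Eilenberg-Zilber map; and naturality of $s$, together with the factorization of $s_{\ojoin(X^{k_1+\cdots+k_n})}$ as an iterated tensor product of the $s_{\ojoin(X^{k_i})}$'s via the associativity isomorphism $\Theta$ of Lemma \ref{lemma:associative} and the monoidal identity (\ref{eqn:ojoinmonoidal}).

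The main obstacle will be the bookkeeping of the suspension shifts $\chsusp^{-1}$ on each $\fraka(n)$ and the Koszul signs generated when tensor factors are permuted across one another in the associativity diagram. The sign convention from \S4.1, combined with the identity $\sgn(\tau_{a,b}) = (-1)^{ab}$ that appeared in the proof of Proposition \ref{strange:equivariance}, is precisely engineered so these signs cancel against the $\sgn(\pi)$ factors carried by $s$. A careful tabulation will be required, but it amounts to standard Koszul sign tracking rather than any new conceptual difficulty.
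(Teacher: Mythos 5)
Your equivariance argument is fine and agrees with the paper's (one-line) treatment. The gap is in the associativity step, and it is a real one: Proposition \ref{prop:atojoperadic} specialized to $X_{i,*}=(X\Sigma,\ldots,X\Sigma)$ and precomposed with diagonals does \emph{not} encode the coherence (\ref{coalg-coherence}). Unwind what that specialization gives: in the source $A(A(X\Sigma^{k_1}),\ldots,A(X\Sigma^{k_n}))$ everything is still a Cartesian product, so the top-right path of that diagram performs the \emph{inner} coactions $\Psi_{k_i}^{X\Sigma}$ first (on product factors, via the diagonals) and applies the outer assembly map $\alpha$ last. The right-hand path of (\ref{coalg-coherence}) does the opposite: it applies the \emph{outer} coaction $\fraka_n^X$ first --- combinatorially $\Psi_n^{X\Sigma}$, landing in the join $\symjoin((X\Sigma)^n)$ --- and must then coact by the $\p_{k_i}$ on factors that now sit \emph{inside a join}, not inside a product. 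Since the Cartesian product does not distribute over $\symjoin$ by any formal/naturality argument, there is no map available for this second stage until one constructs it; this is exactly the paper's new distribution map (\ref{general-map}), $\p_{k_1}\times\cdots\times\p_{k_n}\times\symjoin(X_1,\ldots,X_n)\to\symjoin(\p_{k_1}\times X_1,\ldots,\p_{k_n}\times X_n)$, its composite $\widetilde{\psi}$ of (\ref{general-map2}), and the compatibility Lemma \ref{new-diagram}, proved by a fresh fibre-by-fibre computation with (\ref{eq1}) and (\ref{eq4}) (the identity $x\circ i_{f^{-1}(i)}\circ i_{(g_i\circ i_{f^{-1}(i)})^{-1}(j)}=x\circ i_{h^{-1}(i,j)}$). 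Proposition \ref{prop:atojoperadic} is the input for the operadic associativity of Theorem \ref{prop:psetoperad}; the coaction coherence needs this different lemma, and your outline never produces it. Pasting naturality squares for $\eta_X$, $EZ$ and $s$ cannot bridge the two orders of operations, because the discrepancy is in the order in which non-invertible combinatorial operations are performed, not in a rearrangement of functorial squares.

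Two further points. First, your proposed ``factorization of $s_{\ojoin(X^{k_1+\cdots+k_n})}$ as an iterated tensor product of the $s_{\ojoin(X^{k_i})}$'s'' does not typecheck: the source of $s_{\ojoin(X^{k_1+\cdots+k_n})}$ is $C_*^\niceo(\symjoin((X\Sigma)^{k_1+\cdots+k_n}))$, which is \emph{not} a tensor product of the $C_*^\niceo(\symjoin((X\Sigma)^{k_i}))$ --- the monoidal identity (\ref{eqn:ojoinmonoidal}) holds for $\ojoin$ of $\niceo$-sets, not for $\symjoin$ of $\p$-sets, and $s$ is precisely the map that collapses the symmetric join onto a tensor product; making your factorization precise again requires the intermediate object $\symjoin(\symjoin(X\Sigma^{k_1}),\ldots,\symjoin(X\Sigma^{k_n}))$ and the $\Theta$--$\widetilde{\psi}$ machinery. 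Second, in the right path of (\ref{coalg-coherence}) one descends to $C_*^\niceo(X,X(0))^{\otimes n}$ via $s$ and re-ascends via $\eta_X^{\otimes n}$ between the two coaction stages, and $C_*^\niceo(\eta_X)\circ s_X\neq\id$; showing this insertion is harmless is an additional verification, which the paper avoids by applying $EZ$ once at the start and $s$ once at the end of the single combinatorial diagram coming from Lemma \ref{new-diagram}. The Koszul sign bookkeeping you flag as the main obstacle is, by design of that argument, not where the difficulty lies.
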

For the proof see the last section.

Now suppose that $X=Y_+$ for a simplicial set $Y$. Then $C_*^\niceo(Y_+,Y_+(0))=\chsusp C_*(Y)$ and we conclude that $\chsusp C_*(Y)$ is a coalgebra over $\fraka$. Now we use the device known as operadic desuspension\michal{or suspension?} of $\fraka$ i.e. the operad $\Lambda\fraka$ characterized by the property that giving $\chsusp C$ the structure of a coalgebra over $\fraka$ is the same as giving $C$ the structure of a coalgebra over $\Lambda\fraka$. Explicitly
\begin{equation*}
(\Lambda\fraka)(n)=\chsusp^{1-n}\fraka(n)\otimes\sgn_n
\end{equation*}
where $\sgn_n$ is the sign representation of $\Sigma_n$ (see \cite{Jones}).

\begin{definition}
\label{defn:operad}
The \emph{symmetric join operad} $\{\frakj(n)\}_{n\geq 0}$ is the operad $\frakj=\Lambda\fraka$ in the category of chain complexes. Explicitly
$$\frakj(n)=\chsusp^{-n}C_*^\niceo(\p_n,\partial\p_n)\otimes\sgn_n.$$
\end{definition}

\begin{theorem}The symmetric join operad $\{\frakj(n)\}_{n\geq 0}$ is a unital $E_\infty$-operad of chain complexes. For any simplicial set $Y$ the chain complex $C_*(Y)$ is naturally a $\frakj$-coalgebra (hence $C^*(Y)$ is a $\frakj$-algebra).
\end{theorem}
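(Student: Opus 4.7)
The plan is to handle the structural claims formally and then focus on the $E_\infty$ condition, which is the real content of the theorem.

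Structural assembly goes as follows. The operad structure on $\frakj=\Lambda\fraka$ is formal once $\fraka$ is known to be an operad of chain complexes (established in \S\ref{subsect:constr}), since operadic desuspension preserves operadic identities. The unit of $\fraka$, namely $\chsusp^{-1}\mathrm{id}\in\fraka(1)_0$, is carried over to a unit of $\frakj$ via the identification $\frakj(1)\cong\fraka(1)$ (because $\chsusp^{1-1}=\chsusp^0$ and $\sgn_1$ is trivial). For the coalgebra structure on $C_*(Y)$, I will apply Proposition \ref{pre-operad} to $X=Y_+$: by (\ref{eqn:reduced}) this gives $\chsusp C_*(Y)$ the structure of a $\fraka$-coalgebra, and the defining adjunction of $\Lambda$ converts this to a $\frakj$-coalgebra structure on $C_*(Y)$; the $\frakj$-algebra statement for $C^*(Y)$ is the $k$-linear dual.

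The $E_\infty$ claim requires that each $\frakj(n)$ be free as a $k[\Sigma_n]$-module and have homology $k$ concentrated in degree $0$ with trivial $\Sigma_n$-action. Freeness is essentially tautological: the complex $C_*^\niceo(\p_n,\partial\p_n)$ is generated in each degree by the surjective morphisms $f\in\p(\ord{k},\ord{n})$, and since the right $\Sigma_n$-action $f\mapsto\pi^{-1}f$ forces $\pi=\mathrm{id}$ as soon as $f$ is surjective, the action is free on this basis. Shifting and twisting by $\sgn_n$ preserves the $k[\Sigma_n]$-freeness.

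For acyclicity I will compute $H_*(\p_n,\partial\p_n)$ by comparison with $(\niceo_n,\partial\niceo_n)$. Applying Proposition \ref{prop:strange} to both $\niceo_n$ and $\partial\niceo_n$ (noting $(\partial\niceo_n)\Sigma=\partial\p_n$), together with two applications of the five-lemma on the long exact sequences of the pairs, will produce a quasi-isomorphism $s:C_*^\niceo(\p_n,\partial\p_n)\to C_*^\niceo(\niceo_n,\partial\niceo_n)$. The target equals $\chsusp C_*(\Delta_{n-1},\partial\Delta_{n-1})$ by (\ref{eqn:reduced}), whose homology is $k$ concentrated in degree $n$. The top cycle is represented by $(\mathrm{id},\mathrm{id})$; a direct calculation shows $(\mathrm{id},\mathrm{id})\cdot\pi=(\mathrm{id},\pi^{-1})$, which maps under $s$ to $\sgn(\pi)\cdot\mathrm{id}$, so the $\Sigma_n$-action on $H_n(\p_n,\partial\p_n)$ is the sign representation. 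Finally, applying $\chsusp^{-n}$ and tensoring with $\sgn_n$ to form $\frakj(n)$ shifts the class to degree $0$ and cancels the sign, giving $H_0(\frakj(n))=k$ with trivial action and $H_i=0$ for $i>0$.

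The main obstacle is the comparison argument promoting Proposition \ref{prop:strange} from $(X,X(0))$-pairs to the relative pairs $(\p_n,\partial\p_n)$ and $(\niceo_n,\partial\niceo_n)$. Once this quasi-isomorphism of relative complexes is in hand the rest is routine, and the cancellation of the sign representation through $\Lambda$ serves as a satisfying check that the operadic desuspension is precisely the correction needed to produce a genuine $E_\infty$-operad.
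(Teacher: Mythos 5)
Your proof is correct and follows essentially the same route as the paper: $k[\Sigma_n]$-freeness on the basis of surjective morphisms, the quasi-isomorphism $C_*^\niceo(\p_n,\partial\p_n)\to C_*^\niceo(\niceo_n,\partial\niceo_n)=\chsusp C_*(\Delta_{n-1},\partial\Delta_{n-1})$ deduced from Proposition \ref{prop:strange} via $(\p_n,\partial\p_n)=(\niceo_n,\partial\niceo_n)\Sigma$, and the formal transfer of the $\fraka$-coalgebra structure on $\chsusp C_*(Y)$ through the operadic desuspension $\Lambda$. The details you add beyond the paper's terse write-up --- the five-lemma argument promoting Proposition \ref{prop:strange} from the $0$-component pairs to the boundary pairs, and the computation that $\Sigma_n$ acts on $H_n(C_*^\niceo(\p_n,\partial\p_n))$ by the sign representation, which is then cancelled by the twist by $\sgn_n$ --- are exactly the steps the paper leaves implicit, and you carry them out correctly.
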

\begin{proof}
Each $\frakj(n)_d$ is clearly a free $k[\Sigma_n]$-module. Since $C_d^\niceo(\p_n,\partial\p_n)=0$ for $d<n$, each chain complex $\frakj(n)$ is concentrated in non-negative degrees. Because $(\p_n,\partial\p_n)=(\niceo_n,\partial\niceo_n)\Sigma$, Proposition \ref{prop:strange} provides for $n\geq 1$ a quasi-isomorphism
$$C_*^\niceo(\p_n,\partial\p_n)\to C_*^\niceo(\niceo_n,\partial\niceo_n)=\chsusp C_*({\Delta_{n-1}},\partial{\Delta_{n-1}})$$
where the last complex has one-dimensional homology group concentrated in degree $n$. Moreover $\frakj(0)=k$. The other statements follow from the properties of the operad $\fraka$.


\end{proof}

Let us make a few remarks.

\begin{itemize}
\item 
Since a surjective morphism in $\p(\ord{n+1},\ord{n})$ must send two elements of $\ord{n+1}$ to the same value and be injective otherwise, one can observe that $\frakj(n)_0=k[\Sigma_n]$ splits as a direct sum
$$\frakj(n)_0 = k[\id_n-\sgn(\pi)\pi]_{\pi\in\Sigma_n}\ \oplus\ k[\id_n]$$
where additionally the first summand is precisely the image of the differential $d:\frakj(n)_1\to\frakj(n)_0$. It follows that the sign map 
$$\sgn:\frakj(n)_0\to k$$
is an augmentation of $\frakj(n)$ and it defines a quasi-isomorphism $\frakj\to\mathfrak{Com}$ to the commutative operad.

\item 
The degree zero maps 
$$\frakj(n)_0\otimes\frakj(k_1)_0\otimes\cdots\otimes\frakj(k_n)_0\to\frakj(k_1+\cdots+ k_n)_0$$ agree, up to sign, with the maps
$$k[\Sigma_n]\otimes k[\Sigma_{k_1}]\otimes\cdots\otimes  k[\Sigma_{k_n}]\to k[\Sigma_{k_1+\cdots+ k_n}]$$
induced by the canonical permutation operad $\{\Sigma_n\}_{n\geq 0}$ in the category of sets \cite[Sec. 0.10]{BerFres04}.

\removed{
\item 
Instead of the chain functor $C_*^\niceo$ we could use the normalized chain functor $N_*^\niceo$ throughout the entire construction. This way we obtain the normalized symmetric join operad
$$\frakj^N(n)=\chsusp^{-n}N^\niceo_*(\p_n,\partial\p_n)\otimes\sgn_n$$
which is an $E_\infty$-operad coacting naturally on the normalized chains $N_*(Y)$ of simplicial sets $Y$. Note that a morphism $f\in\p_n(k)=\p(\ord{k},\ord{n})$ is non-degenerate as a simplex of the underlying $\niceo$-set if and only if there is no $i\in \ord{k}$ such that $f(i)=f(i+1)$ and $i$ immediately precedes $i+1$ in the ordering of their common fibre of $f$.
}

\item 
Since the abelian groups $\frakj(n)_d$ are generated by $\p$-morphisms whose underlying set maps are surjections it is quite natural to expect a relation between the join operad and the sequence operad $\nices$ of \cite{McSmith03}. Indeed, the join operad maps to the sequence operad by a map induced, up to sign, by the forgetful functor $\p\to\sets$. More effort goes into getting the correct signs.

Let us remind after \cite{McSmith03,BerFres04} that we call a map $f\in\sets(\ord{m},\ord{n})$ \emph{nondegenerate} if it is a surjection and $f(i)\neq f(i+1)$ for all $1\leq i<m$. The sequence operad is an operad $\{\nices(n)\}_{n\geq 0}$ of chain complexes such that $\nices(n)_d$ is a free $k$-module generated by all nondegenerate surjections $f:\ord{n+d}\to\ord{n}$. The differential, symmetric group action and operadic composition are given in 2.18, 2.19 and 2.26 of \cite{McSmith03}.

Any map $f\in\sets(\ord{m},\ord{n})$ has a unique decomposition as $f=g_f\pi_f$ where $g_f\in\niceo(\ord{m},\ord{n})$, $\pi_f\in\Sigma_{m}$ and $\pi_f$ is order-preserving on each fibre $f^{-1}(i)$. Define the sign of $f$ as
\begin{equation*}
s(f)=(-1)^{mn}\sgn(\pi_f)(-1)^{\sum_{j=1}^m f(j)}.
\end{equation*}
The map $S:\frakj(n)_{m-n}\to\nices(n)_{m-n}$ is defined by
\begin{equation*}
S(\chsusp^{-n}f)=s(f)\cdot f
\end{equation*}
for a surjective morphism $f\in\p(\ord{m},\ord{n})$ where the result is $0$ if the underlying set map of $f$ is degenerate.

The check that $S$ is a map of operads is a tedious, but straightforward calculation.
Let us just remark that the various terms of the formula \cite[2.26]{McSmith03} for the operadic composition in $\nices$ are hidden in the Eilenberg-Zilber transformation used to produce the operad $\fraka$ from the combinatorial operad $\{(\p_n,\partial\p_n)\}_{n\geq 0}$.
\end{itemize}

\section{Proofs.}
\label{sect:appendix}
This section contains all the postponed proofs. They are expressed using the interpretation of morphisms in $\p$ as set maps $f$ with ordering on each fibre and rely on the notation and properties (\ref{eq1})-(\ref{eq4}) of Section \ref{subsect:extranotation}. We fix one more convention. Given a set $\ord{k_1+\cdots+k_n}$ the pair $(i,j)$ denotes the $j$-th position in the $i$-th summand, i.e. the element $k_1+\cdots+k_{i-1}+j$.

\subsection{Proof that $\alpha$ is well-defined.}
\label{appendix:alphaok}
If $g\in\p(\ord{k'},\ord{k})$ then by (\ref{eqn:alpha}) and (\ref{eqn:joinstruct})
\begin{eqnarray*}
\alpha(f;x_1,\ldots,x_n)\circ g&=&(x_1\circ i_{f^{-1}(1)},\ldots,x_n\circ i_{f^{-1}(n)})\circ g\\
&=&(x_1\circ  i_{f^{-1}(1)}\circ g^{f^{-1}(1)},\ldots,x_n\circ  i_{f^{-1}(n)}\circ g^{f^{-1}(n)})
\end{eqnarray*}
while
\begin{eqnarray*}
\alpha((f;x_1,\ldots,x_n)\circ g)&=&\alpha(fg;x_1\circ g,\ldots,x_n\circ g)\\
&=&(x_1\circ  g\circ i_{(fg)^{-1}(1)},\ldots,x_n\circ g\circ i_{(fg)^{-1}(n)}).
\end{eqnarray*}
The two maps are equal because of (\ref{eq1}).

\subsection{Proof of associativity of the join (Lemma \ref{lemma:associative}).}
\label{appendix:joinassoc}
The degree $k$ components of $J(J(X_{1,*}),\ldots,J(X_{n,*}))$ and $J(X_{*,*})$ are
\begin{equation}
\label{dd1}
\tag{$\ast$}
\coprod_{f\in\pp(\ord{k},\ord{n})}\ \prod_{i=1,\ldots,n}\ \coprod_{g_i\in\pp(\ord{\card{f^{-1}(i)}},\ord{k_i})}\ \prod_{j=1,\ldots,k_i}\ X_{i,j}(\card{g_i^{-1}(j)})
\end{equation}
and
\begin{equation}
\label{dd2}
\tag{$\ast\ast$}
\coprod_{h\in\pp(\ord{k},\ord{k_1+\cdots+k_n})}\ \prod_{(i,j)\in\ord{k_1+\cdots+k_n}}\ X_{i,j}(\card{h^{-1}(i,j)})
\end{equation}
respectively, where $\pp=\niceo$ for $J=\ojoin$ and $\pp=\sets$ for $J=\symjoin$.

The map $\Theta$ is defined as follows. The summand of (\ref{dd1}) corresponding to the choice $(f;g_1,\ldots,g_n)$ is being send to the summand of (\ref{dd2}) determined by $h=f\langle g_1,\ldots,g_n\rangle$ using on each $X_{i,j}$ the identity maps $X_{i,j}(\card{g_i^{-1}(j)})\xrightarrow{=} X_{i,j}(\card{h^{-1}(i,j)})$. The last statement makes sense since we have an equality of sets
$$h^{-1}(i,j)=g_i^{-1}(j).$$
It is a routine check, using (\ref{eq2}), that such defined maps combine to a map of $\niceo$-sets or $\p$-sets. 

The map $\Theta$ is an isomorphism because every map $h\in\pp(\ord{k},\ord{k_1+\cdots+k_n})$ has a unique decomposition of the form $h=f\langle g_1,\ldots,g_n\rangle$ where $f\in\pp(\ord{k},\ord{n})$ and $g_i\in\pp(\ord{\card{f^{-1}(i)}},\ord{k_i})$.


\subsection{Proof of Proposition \ref{prop:atojoperadic}.}
\label{appendix:atojoperadic}
First note that the degree $k$ component of $A(X_*)$ can also be written as
$$A(X_*)(k)=\coprod_{f\in\p(\ord{k},\ord{n})} \prod_{i=1,\ldots,n} X_i(k).$$
With this convention the component in degree $k$ of the relevant diagram is:
\begin{adjustwidth}{-1in}{-1in}
\begin{minipage}[b]{1 \linewidth}\centering
\[
\xymatrix{
\underset{f\in\p(\ord{k},\ord{n})}{\coprod}\ \underset{i=1,\ldots,n}{\prod}\ \underset{g_i\in\p(\ord{k},\ord{k_i})}{\coprod}\ \underset{j=1,\ldots,k_i}{\prod}X_{i,j}(k)
\ar[r]^-{A(\alpha^n)}\ar[dd]_{\Psi_{k_1,\ldots,k_n}}
&
\underset{f\in\p(\ord{k},\ord{n})}{\coprod}\ \underset{i=1,\ldots,n}{\prod}\ \underset{g_i\in\sets(\ord{k},\ord{k_i})}{\coprod}\ \underset{j=1,\ldots,k_i}{\prod}X_{i,j}(\card{g_i^{-1}(j)})
\ar[d]^-\alpha
\\
&
\underset{f\in\sets(\ord{k},\ord{n})}{\coprod}\ \underset{i=1,\ldots,n}{\prod}\ \underset{g_i\in\sets(\ord{\card{f^{-1}(i)}},\ord{k_i})}{\coprod}\ \underset{j=1,\ldots,k_i}{\prod}X_{i,j}(\card{g_i^{-1}(j)})
\ar[d]^-\Theta
\\
\underset{h\in\p(\ord{k},\ord{k_1+\cdots+k_n})}{\coprod}\ \underset{(i,j)\in\ord{k_1+\cdots+k_n}}{\prod}\ X_{i,j}(k)
\ar[r]^-\alpha
& 
\underset{h\in\sets(\ord{k},\ord{k_1+\cdots+k_n})}{\coprod}\ \underset{(i,j)\in\ord{k_1+\cdots+k_n}}{\prod}\ X_{i,j}(\card{h^{-1}(i,j)})
}
\]
\end{minipage}
\end{adjustwidth}
We will use the formulas (\ref{map:phi2}), (\ref{eqn:alpha}), (\ref{eqn:joinstruct}) and the definition of $\Theta$ from \ref{appendix:joinassoc}.

Suppose we start in the summand of the upper-left corner indexed by the collection $(f;g_1,\ldots,g_n)$. Both ways of going around the diagram send this summand to the summand indexed by $$h=f\langle g_1\circ i_{f^{-1}(1)}, \ldots, g_n\circ i_{f^{-1}(n)}\rangle.$$
The map $\Theta\circ\alpha\circ A(\alpha^n)$ acts on each individual factor $X_{i,j}$ of that summand by the $\p$-morphism
$$\xi_1=i_{g_i^{-1}(j)}\circ(i_{f^{-1}(i)})^{g_i^{-1}(j)}\circ\id.$$
On the other hand, the map $\alpha\circ \Psi_{k_1,\ldots,k_n}$ acts on $X_{i,j}$ via the morphism
$$\xi_2=\id\circ i_{h^{-1}(i,j)}.$$
We have the equality
$$\xi_1=i_{g_i^{-1}(j)}\circ(i_{f^{-1}(i)})^{g_i^{-1}(j)}\circ\id=i_{f^{-1}(i)\cap g_i^{-1}(j)}= i_{h^{-1}(i,j)}=\xi_2$$
where the first transition follows from (\ref{eq4}) and the second from the equality of sets  $h^{-1}(i,j)=f^{-1}(i)\cap g_i^{-1}(j)$. This proves that the diagram commutes.

\subsection{Proof of Theorem \ref{prop:psetoperad}.}
\label{appendix:psetoperad}

The operadic equivariance diagram involving the action of $\Sigma_n$ commutes because both the map $\alpha$ and the isomorphism $\Theta$ of (\ref{eqn:standard-assoc}) are $\Sigma_n$-equivariant. The equivariance with respect to the $(\Sigma_{k_1}\times\cdots\times\Sigma_{k_n})$-action follows from the fact that $\alpha$ is a natural transformation and because the isomorphism  (\ref{eqn:standard-assoc}) is $(\Sigma_{k_1}\times\cdots\times\Sigma_{k_n})$-equivariant.

Now we need to verify the operadic associativity axiom. Consider the family $X_{i,j}=\p_{k_{i,j}}$ of $\p$-sets as in Proposition \ref{prop:atojoperadic}, where $i=1,\ldots,n$ and $j=1,\ldots,k_i$ for some $k_i$. We need to show the commutativity of the outermost rectangle in the diagram
\begin{adjustwidth}{-1in}{-1in}
\begin{minipage}[b]{1 \linewidth}\centering
\begin{equation*}
\xymatrix{
A(A(\p_{k_{1,*}}),\ldots,A(\p_{k_{n,*}})) \ar[r]^-{A(\alpha^n)}\ar[dd]_{\Psi_{k_1,\ldots,k_n}} & A(\symjoin(\p_{k_{1,*}}),\ldots,\symjoin(\p_{k_{n,*}})) \ar[d]^-\alpha\ar[r]^-{A(\Theta^n)} &
A(\p_{\sum k_{1,*}},\ldots,\p_{\sum k_{n,*}}) \ar[d]^-\alpha\\
 & \symjoin(\symjoin(\p_{k_{1,*}}),\ldots,\symjoin(\p_{k_{n,*}}))\ar[d]^-{\Theta}\ar[r]^-{\symjoin(\Theta^n)} &
\symjoin(\p_{\sum k_{1,*}},\ldots,\p_{\sum k_{n,*}}) \ar[d]^-\Theta\\
A(\p_{k_{*,*}})\ar[r]^-\alpha & \symjoin(\p_{k_{*,*}}) \ar[r]^-\Theta& \p_{\sum k_{*,*}}
}
\end{equation*}
\end{minipage}
\end{adjustwidth}

The left rectangle commutes by Proposition \ref{prop:atojoperadic}. The top right square commutes because $\alpha:A\to\symjoin$ is a natural transformation. It remains to show the bottom right square commutes (note that all maps in that square are isomorphisms). The degree $k$ component of the iterated join $\symjoin(\symjoin(\p_{k_{1,*}}),\ldots,\symjoin(\p_{k_{n,*}}))$ is
\begin{equation*}
\coprod_{f\in\sets(\ord{k},\ord{n})}\ \prod_{i=1,\ldots,n}\ \coprod_{g_i\in\sets(\ord{\card{f^{-1}(i)}},\ord{k_i})}\ \prod_{j=1,\ldots,k_i}\ \p(\ord{\card{g_i^{-1}(j)}},\ord{k_{i,j}}).
\end{equation*}
Consider an element of this set given by a collection
$$f\in\sets(\ord{k},\ord{n}),\quad \big\{g_i\in\sets(\ord{\card{f^{-1}(i)}},\ord{k_i})\big\}_{i=1,\ldots,n},$$ $$\big\{h_{i,j}\in\p(\ord{\card{g_i^{-1}(j)}},\ord{k_{i,j}})\big\}_{(i,j)\in\ord{k_1+\cdots+k_n}}.$$
The two ways of going to $\p_{\sum k_{*,*}}(k)$ send this family, respectively, to the morphisms
$$\big(\ f\langle g_1,\ldots,g_n\rangle\ \big)\langle h_{1,1},\ldots,h_{1,k_1},\ldots,h_{n,1},\ldots,h_{n,k_n}\rangle$$
and
$$f\langle\ g_1\langle h_{1,1},\ldots,h_{1,k_1}\rangle,\ldots,g_n\langle h_{n,1},\ldots,h_{n,k_n}\rangle\ \rangle$$
which are easily seen to be equal as elements of $\p(\ord{k},\ord{\sum k_{*,*}})$.

\subsection{Proof of Proposition \ref{pre-operad}.}
\label{appendix:yetanotherdiagram}
Each of the maps used in the definition of $\fraka_n^X$ is $\Sigma_n$-equivariant, hence so is the composition. For $C$ to be a coalgebra over $\fraka$ we need the commutativity of the diagram
\begin{equation}
\label{coalg-coherence}
\xymatrix{
\fraka(n)\otimes\fraka(k_1)\otimes\cdots\otimes\fraka(k_n)\otimes C \ar[rr]\ar[d] & & \fraka(k_1)\otimes\cdots\otimes\fraka(k_n)\otimes C^{\otimes n} \ar[d]\\
\fraka(\sum k_*)\otimes C \ar[rd] &  & \fraka(k_1)\otimes C\otimes\cdots\fraka(k_1)\otimes C \ar[ld] \\
& C^{\otimes \sum k_*} &
}
\end{equation}

For this we first generalize the combinatorial `coaction' maps (\ref{osigma-coaction-nonrel}). For any sequence $X_1,\ldots,X_n$ of $\p$-sets and integers $k_1,\ldots,k_n$ we have a map
\begin{equation}
\label{general-map}
\p_{k_1}\times\cdots\times\p_{k_n}\times\symjoin(X_1,\ldots,X_n)\to\symjoin(\p_{k_1}\times X_1,\ldots,\p_{k_n}\times X_n).
\end{equation}
On components of degree $k$
$$\p(\ord{k},\ord{k_1})\times\cdots\times\p(\ord{k},\ord{k_n})\times\coprod_{f\in\sets(\ord{k},\ord{n})}\ \prod_{i=1}^n X_i(\card{f^{-1}(i)})\to \coprod_{f\in\sets(\ord{k},\ord{n})}\ \prod_{i=1}^n \p(\ord{\card{f^{-1}(i)}},\ord{k_i})\times X_i(\card{f^{-1}(i)})$$
it is given by the formula
$$(g_1,\ldots,g_n;x_1,\ldots,x_n)\to((g_1\circ i_{f^{-1}(1)},x_1),\ldots,(g_n\circ i_{f^{-1}(n)},x_n)).$$
By post-composing the map of (\ref{general-map}) with the `coactions' $\psi_{k_i}^{X_i}$ of (\ref{osigma-coaction-nonrel}), we obtain maps
\begin{equation}
\label{general-map2}
\widetilde{\psi}:\p_{k_1}\times\cdots\times\p_{k_n}\times\symjoin(X_1,\ldots,X_n)\to\symjoin(\symjoin(X_1^{k_1}),\ldots,\symjoin(X_n^{k_n}))
\end{equation}
which satisfy the following compatibility.

\begin{lemma}
\label{new-diagram}
For any $\p$-set $X$ and integers $n,k_1,\ldots,k_n$ the following diagram of $\p$-sets commutes.
\begin{equation*}
\xymatrix{
\p_n\times\p_{k_1}\times\cdots\times\p_{k_n}\times X \ar[rr]^{\psi_n^X} \ar[d]_{\psi_{k_1,\ldots,k_n}} & & \p_{k_1}\times\cdots\times\p_{k_n}\times \symjoin(X^n)\ar[d]^{\widetilde{\psi}}\\
\p_{\sum k_*}\times X \ar[rd]_{\psi_{\sum k_*}^X} &  & \symjoin(\symjoin(X^{k_1}),\ldots,\symjoin(X^{k_n})) \ar[ld]^{\Theta} \\
 & \symjoin(X^{\sum k_*}) &
}
\end{equation*}
\end{lemma}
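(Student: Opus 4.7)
The plan is to chase a generic degree-$k$ element $(f;g_1,\ldots,g_n;x)\in\p_n(k)\times\p_{k_1}(k)\times\cdots\times\p_{k_n}(k)\times X(k)$ around both paths and verify that the outputs agree. Both paths will land in the summand of $\symjoin(X^{\sum k_*})(k)$ indexed by
$$h=f\langle g_1\circ i_{f^{-1}(1)},\ldots,g_n\circ i_{f^{-1}(n)}\rangle\in\sets(\ord{k},\ord{\sum k_*}),$$
so the content of the lemma is the equality of the fibres over this common index.

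Along the bottom-left route, $\psi_{k_1,\ldots,k_n}(f;g_1,\ldots,g_n)=h$ by (\ref{map:phi2}) and then $\psi_{\sum k_*}^X(h;x)=\alpha(h;x,\ldots,x)$ yields, by (\ref{eqn:alpha}), the tuple whose $(i,j)$-th entry is $x\circ i_{h^{-1}(i,j)}$. For the top-right route, $\psi_n^X(f;x)=\alpha(f;x,\ldots,x)$ sits in the $f$-summand of $\symjoin(X^n)$ with data $(x\circ i_{f^{-1}(i)})_i$; the map (\ref{general-map}) then pairs each $g_i$ with the restriction $i_{f^{-1}(i)}$ to give the element $\bigl((g_i\circ i_{f^{-1}(i)},\,x\circ i_{f^{-1}(i)})\bigr)_i$ in the $f$-summand of $\symjoin(\p_{k_1}\times X,\ldots,\p_{k_n}\times X)$; componentwise application of $\psi_{k_i}^X$ via (\ref{eqn:alpha}) produces, in the $(g_i\circ i_{f^{-1}(i)})$-summand of $\symjoin(X^{k_i})$, the entries $x\circ i_{f^{-1}(i)}\circ i_{(g_i\circ i_{f^{-1}(i)})^{-1}(j)}$; and $\Theta$ assembles the lot into the same $h$-summand.

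It remains to show, for each $(i,j)$, that $i_{f^{-1}(i)}\circ i_{(g_i\circ i_{f^{-1}(i)})^{-1}(j)}=i_{h^{-1}(i,j)}$. Both sides are order-preserving injections into $\ord{k}$, and the left-hand side has image $i_{f^{-1}(i)}\bigl(i_{f^{-1}(i)}^{-1}(g_i^{-1}(j))\bigr)=f^{-1}(i)\cap g_i^{-1}(j)=h^{-1}(i,j)$, which is exactly the image of the right-hand side (this is the set identity already used in the proof of Proposition \ref{prop:atojoperadic}, and it is (\ref{eq4}) in disguise, applied with $A=f^{-1}(i)$ and $B=g_i^{-1}(j)$). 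The main obstacle is purely bookkeeping: correctly tracking the summand labels and indexing morphisms as one unwinds the nested $\symjoin$'s along the top-right route; once that is done, the analytical content of the lemma reduces to this single identification of nested inclusions.
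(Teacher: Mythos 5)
Your proof is correct and takes essentially the same approach as the paper's: both chase the tuple $(f;g_1,\ldots,g_n;x)$ around the two paths, observe that each lands in the summand indexed by $h=f\langle g_1\circ i_{f^{-1}(1)},\ldots,g_n\circ i_{f^{-1}(n)}\rangle$, and reduce the claim to the identity $i_{f^{-1}(i)}\circ i_{(g_i\circ i_{f^{-1}(i)})^{-1}(j)}=i_{h^{-1}(i,j)}$. The paper settles this last identity by rewriting with (\ref{eq1}) and then invoking (\ref{eq4}) as in the proof of Proposition \ref{prop:atojoperadic}, while you compare images of order-preserving injections directly; as you yourself note, this is the same fact in disguise.
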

\begin{proof}
The proof resembles that of Section \ref{appendix:atojoperadic} and uses (\ref{map:phi2}), (\ref{eqn:alpha}), (\ref{eqn:joinstruct}), the map $\Theta$ from \ref{appendix:joinassoc} and (\ref{general-map2}). Consider a tuple $(f;g_1,\ldots,g_n;x)$ in $\p_n\times\p_{k_1}\times\cdots\times\p_{k_n}\times X$. Each of the two ways around the diagram sends it to the summand of the join indexed by the map
$$h=f\langle g_1\circ i_{f^{-1}(1)}, \ldots, g_n\circ i_{f^{-1}(n)}\rangle.$$
The result in $\symjoin(X^{\sum k_*})$ is a tuple indexed by pairs $(i,j)$, $1\leq i\leq n$, $1\leq j\leq k_i$. The left path in the diagram produces a tuple whose $(i,j)$-th entry is
$$x\circ i_{h^{-1}(i,j)}$$
while for the other path it is
$$x\circ i_{f^{-1}(i)}\circ i_{(g_i\circ i_{f^{-1}(i)})^{-1}(j)}=x\circ i_{f^{-1}(i)}\circ i_{(i_{f^{-1}(i)})^{-1}(g_i^{-1}(j))}=x\circ i_{g_i^{-1}(j)}\circ(i_{f^{-1}(i)})^{g_i^{-1}(j)}$$
where the last equality follows from (\ref{eq1}). We conclude that the two maps are equal as in Section \ref{appendix:atojoperadic}.
\end{proof}

To complete the proof of Proposition \ref{pre-operad} note that Lemma \ref{new-diagram} gives, after application of the chains functor $C_\ast^\niceo(\cdot)$, the following commutative diagram for every $\niceo$-set $X$.

\begin{adjustwidth}{-1in}{-1in}
\begin{minipage}[b]{1 \linewidth}\centering
\begin{equation*}
\xymatrix{
C_\ast^\niceo(\p_n)\otimes C_\ast^\niceo(\p_{k_1})\otimes\cdots\otimes C_\ast^\niceo(\p_{k_n})\otimes C_\ast^\niceo(X) \ar[d]^{C_\ast^\niceo(\eta_X)} & & \\
C_\ast^\niceo(\p_n)\otimes C_\ast^\niceo(\p_{k_1})\otimes\cdots\otimes C_\ast^\niceo(\p_{k_n})\otimes C_\ast^\niceo(X\Sigma) \ar[d]^{EZ} & & \\
C_*^\niceo(\p_n\times\p_{k_1}\times\cdots\times\p_{k_n}\times X\Sigma) \ar[rr] \ar[d] & & C_*^\niceo(\p_{k_1}\times\cdots\times\p_{k_n}\times \symjoin(X\Sigma^n))\ar[d] \\
C_*^\niceo(\p_{\sum k_*}\times X\Sigma) \ar[rd] &  & C_*^\niceo(\symjoin(\symjoin(X\Sigma^{k_1}),\ldots,\symjoin(X\Sigma^{k_n}))) \ar[ld] \\
 & C_*^\niceo(\symjoin(X\Sigma^{\sum k_*})) \ar[d]^{s} & \\
& C_\ast^\niceo(X)^{\otimes \Sigma k_*} & 
}
\end{equation*}
\end{minipage}
\end{adjustwidth}
where $s$ is the map of Proposition \ref{strange:equivariance}.

It follows that the same diagram commutes for the relative objects $(\p,\partial\p)$, $(X,X(0))$ and $(X\Sigma,X\Sigma(0))$ and the two ways of traversing that diagram correspond to the two ways around (\ref{coalg-coherence}) for $C=C_*^\niceo(X,X(0))$. That ends the proof.

\removed{
\subsection{Proof that $S$ is a map of chain complexes.}
If $d_i f\in\sets(\ord{m-1},\ord{n})$ denotes the map $f\circ \delta_i$ obtained by skipping the $i$-th source element and reindexing then we have $\pi_{d_if}=d_i(\pi_f)$ (see (\ref{eqn:di})) and a short computation using (\ref{eqn:signs}) shows
\begin{equation*}
s(d_if)=(-1)^{n+i+\pi_f(i)+f(i)}s(f).
\end{equation*}
To check that $S$ is a chain map we compute
\begin{eqnarray*}
Sd(\chsusp^{-n}f)&=&S((-1)^n\sum_{i=1}^m(-1)^i\chsusp^{-n}d_if)\\
&=&(-1)^n\sum_{i=1}^m(-1)^i(-1)^{n+i+\pi_f(i)+f(i)}s(f)d_if\\
&=&s(f)\sum_{i=1}^m(-1)^{\pi_f(i)+f(i)}d_if
\end{eqnarray*}
and $dS(\sigma^{-n}f)=s(f)\cdot d(f)$, so $S$ will be a chain map if $d(f)=\sum_{i=1}^m(-1)^{\pi_f(i)+f(i)}d_if$ in $\nices(n)$, but this is exactly the formula of \cite[2.18]{McSmith03} for the differential in $\nices$.

The verification that $S$ is $\Sigma_n$-equivariant is a little longer. Let $\tau\in\Sigma_n$. The (left) action of $\tau$ on $\frakj(n)_{k-n}$ is given by
$$\tau\circ\chsusp^{-n}f=\sgn(\tau)\cdot\chsusp^{-n}(\tau f).$$
Now the underlying function of $\tau f$ decomposes as $\tau f=\tau g_f\pi_f=\tau_*g_f\circ(g_f^*\tau\circ\pi_f)$ which shows that $\pi_{\tau f}=g_f^*\tau\circ\pi_f$. For simplicity let us denote $a_i=\card{g_f^{-1}(i)}$ for $i=1\ldots,n$. Then one easily checks
 \begin{eqnarray*}
 \sgn(g^*\tau)&=&(-1)^{c(g,\tau)}\quad\textrm{where}\ c(g,\tau)=\sum_{\substack{i<j\\ \tau(i)>\tau(j)}}a_ia_j\\
 s(f)&=&(-1)^{kn}\sgn(\pi_f)(-1)^{\sum_{i=1}^nja_j}\\
 s(\tau f)&=&(-1)^{kn}\sgn(\tau f)(-1)^{\sum_{i=1}^nja_{\tau^{-1}(j)}}=\\
&=&(-1)^{kn}\sgn(\pi_f)(-1)^{c(g_f,\tau)}(-1)^{\sum_{i=1}^n\tau(j)a_{j}}.
\end{eqnarray*}
It follows that
\begin{eqnarray*}
S(\tau\circ(\chsusp^{-n}f))&=&\sgn(\tau)s(\tau f)\cdot\tau f=\\
&=&\big((-1)^{kn}\sgn(\tau)\sgn(\pi_f)(-1)^{c(g_f,\tau)}(-1)^{\sum_{i=1}^n\tau(j)a_{j}}\big)\cdot\tau f
\end{eqnarray*}
and
\begin{eqnarray*}
\tau\circ S(\chsusp^{-n}f)&=&\tau\circ(s(f)\cdot f)=\\
&=&\big((-1)^{kn}\sgn(\pi_f)(-1)^{\xi(f,\tau)}(-1)^{\sum_{i=1}^nja_{j}}\big)\cdot\tau f
\end{eqnarray*}
where $$\xi(f,\tau)=\sum_{\substack{i<j\\ \tau(i)>\tau(j)}}(a_i-1)(a_j-1)=c(g_f,\tau)-\sum_{\substack{i<j\\ \tau(i)>\tau(j)}}(a_i+a_j)+\sum_{\substack{i<j\\ \tau(i)>\tau(j)}}1$$ is the sign associated to the action of $\tau$ on the sequence operad \cite[2.19]{McSmith03}. The signs will match if we prove the equality
$$\sum_{\substack{i<j\\ \tau(i)>\tau(j)}}(a_i+a_j)+\sum_{i=1}^na_i(i+\tau(i))=0\pmod{2}$$
which is left as an exercise.


}


\begin{thebibliography}{1}

\bibitem{BatBer} M.A. Batanin, C. Berger, \textit{The lattice path operad and Hochschild cochains}, Contemporary Mathematics, to appear

\bibitem{BerFres04} C. Berger, B. Fresse, \textit{Combinatorial operad actions on cochains}, Math. Proc. Cambridge Philos. Soc. 137 (2004), pp. 135-174. 

\bibitem{DHK} W.G.Dwyer, M.J.Hopkins, D.M.Kan, \textit{The homotopy theory of cyclic sets}, Trans.AMS, Vol.291, NO.1 (Sep. 1985), 281-289


\bibitem{DS} W.G.Dwyer, J.Spalinski \textit{Homotopy theories and model categories}, in I.M.James (ed.), Handbook of Algebraic Topology, Elsevier, Amsterdam 1995



\bibitem{Ehlers} P.J. Ehlers and T. Porter, \textit{Joins for (augmented) simplicial sets}, Journal of Pure and Applied Algebra, Vol. 145, Issue 1, January 2000, pp. 37-44 

\bibitem{FL} Z. Fiedorowicz, J.-L. Loday, \textit{Crossed Simplicial Groups and their Associated Homology}, Trans. A.M.S., Vol.326, No.1 (July 1991), pp. 57-87






\bibitem{Jones} J.D.S.Jones, \textit{Lectures on operads}, Contemporary
Mathematics 315, 89-130


\bibitem{Man01} M.Mandell, \textit{$E_\infty$-algebras and $p$-adic
homotopy theory}, Topology 40 (2001), 43-94

\bibitem{Man06} M.Mandell, \textit{Cochains and homotopy type}, Publ. Math. Inst. Hautes Etudes Sci.  No. 103  (2006), 213-246

\bibitem{May} J.P.May \textit{Simplicial objects in algebraic topology},
Van Nostrand, New York, 1967

\bibitem{McSmith03} J.McClure, J.H.Smith, \textit{Multivariable
cochain operations and little $n$-cubes}, J. Amer. Math. Soc.  16  (2003),
681-704

\bibitem{MSS} M.Markl, S.Shnider, J.Stasheff
\textit{Operads in Algebra, Topology and Physics},
Mathematical Surveys and Monographs, AMS, Providence, RI, 2002

\bibitem{Pirashvili} T.Pirashvili 
\textit{On the PROP corresponding to bialgebras}, Cahiers de topologie et geometrie differentielle categoriques 43 (2002) 221-239

\bibitem{Steen} N.E.Steenrod, \textit{Products of cocycles and extensions of mappings}, Ann. of Math. (2) 48 (1947), 290-320


\end{thebibliography}
\end{document}